\newtheorem{thm}{Theorem}[section]
\newtheorem{lem}[thm]{Lemma}
\newtheorem{prop}[thm]{Proposition}
\theoremstyle{definition}
\newtheorem{defn}[thm]{Definition}
\newtheorem{rem}[thm]{Remark}
\theoremstyle{remark}
\numberwithin{equation}{section}
\newcommand{\ep}{\varepsilon}
\newcommand{\la}{\lambda}
\newcommand{\si}{\sigma}
\newcommand{\Si}{\Sigma}
\newcommand{\csi}{\xi}
\newcommand{\bfz}{{\mathbb {Z}}}
\newcommand{\bfn}{{\mathbb {N}}}
\newcommand{\+}{\oplus}
\newcommand{\Z}{\mathbb Z}
\newcommand{\N}{\mathbb N}
\newcommand{\R}{\mathbb R}
\newcommand{\del}{\partial}
\newcommand{\co}{\colon\thinspace} 
\newcommand{\CP}{{\mathbb C}{\mathbb P}}
\newcommand{\bCPk}{{\overline {{\mathbb C}{\mathbb P}^2}}}
\newcommand{\sg}{\operatorname{sg}}
\renewcommand{\int}{\operatorname{int}}
\newcommand{\sgn}{\operatorname{sgn}}
\begin{document}
\mathsurround=1pt 
\title[Singular maps on exotic $4$-manifold pairs]
{Singular maps on exotic $4$-manifold pairs}

\keywords{$4$-manifold, smooth structure, stable map, genus function,
  Seiberg-Witten invariant.}

\thanks{2010 {\it Mathematics Subject Classification}.
Primary 57R55; Secondary 57R45, 57M50, 57R15.}

\author{Boldizs\'ar Kalm\'ar}
\address{Alfr\'ed R\'enyi Institute of Mathematics,
Hungarian Academy of Sciences \newline
Re\'altanoda u. 13-15, 1053 Budapest, Hungary}
\email{bkalmar@renyi.hu}

\author{Andr\'as I. Stipsicz}
\address{Alfr\'ed R\'enyi Institute of Mathematics,
Hungarian Academy of Sciences \newline
Re\'altanoda u. 13-15, 1053 Budapest, Hungary and \newline
Institute for Advanced Study, Princeton, NJ}
\email{stipsicz@renyi.hu}

\thanks{\today}

\begin{abstract}
  We show examples of  pairs of smooth, compact, homeomorphic $4$-manifolds,
  whose diffeomorphism types are distinguished by the topology of the
  singular sets of  smooth stable maps defined on them. In this distinction we rely on 
  results from Seiberg-Witten theory.
\end{abstract}

\maketitle

\section{Introduction}
Different smooth structures on a given topological $4$-manifold have
been shown to exist by a rather delicate count of solutions of certain
geometric PDE's associated to the smooth structure (and some further
choices, such as a Riemannian metric and possibly a spin$^c$
structure). This idea was the basis of the definition of Donaldson's
polynomial invariant \cite{donaldson}, as well as the Seiberg-Witten
invariants \cite{witten}. In these invariants specific connections
(and sections of bundles associated to the further structures on the
$4$-manifold) have been counted for the potentially different smooth
structures.  By the pinoneering work of Kronheimer and Mrowka
\cite{KrMr} it was clear that, through the \emph{adjunction
  inequalities}, the invariants provide strong restrictions on the
topology of surfaces smoothly embedded in the $4$-manifold
representing some fixed homology classes. In a slightly different
direction, work of Taubes \cite{Taubes} provided obstructions for the
existence of symplectic structures compatible with the chosen smooth
structure in terms of the Seiberg-Witten invariants.  This idea then
leads to a simple distinction between certain pairs of smooth
structures: one of them (which is compatible with a symplectic
structure) admits a Lefschetz fibration (or more generally a Lefschetz
pencil) map \cite{Donaldsonpencil}, while the one which is not
compatible with any symplectic structure does not admit such a map
\cite{Gompf, GS99}.  Similarly, for $4$-manifolds with nonempty
boundary there are topological examples with two smooth structures
such that one admits a Lefschetz fibration with bounded fibers (and
hence a Stein structure), cf.\ Loi-Piergallini \cite{LP} and
Akbulut-Ozbagci \cite{AO}, while the other smooth structure does not
carry a Stein structure, and therefore does not carry a Lefschetz
fibration map either. Such pair of smooth structures was found by
Akbulut-Matveyev~\cite{AM}, cf.\ Theorem~\ref{thm:am}.

Further notable examples of manifolds distinguished by some
properties of smooth maps defined on them are provided by ``large''
exotic $\R^4$'s, since these noncompact $4$-manifolds do not admit
embedding into the standard Euclidean $4$-space $\R^4$, cf.\
\cite[Section~9]{GS99}.  Examples of similar kind are the smooth
structures on certain connected sums of $S^2$-bundles over surfaces
(cf.\ the exotic structures described by J. Park \cite{Pa00}), which
have submersions with definite folds only for the standard structure
\cite{SS99}.

In the present work we will find properties of stable/fold maps such
that the geometry of their singular sets will distinguish exotic
smooth structures on some appropriately chosen topological
$4$-manifolds. We will apply a result of Saeki from \cite{Sa03}
(cf.\ also Theorem~\ref{thm:saeki}) in constructing maps with the
desired properties on some of our examples. We appeal to
Seiberg-Witten theory (in particular, to the adjunction inequality and
its consequences) in showing that maps with certain prescribed
singular sets do not exist on our carefully chosen other examples. The
first and most obvious pair of examples for such phenomena is provided
by Akbulut-Matveyev \cite{AM} (cf.\ Theorem~\ref{thm:am}) --- in the
following we extend their idea to an infinite family of such exotic
pairs (given in Theorem~\ref{thm:vegtelen}).

To start our discussion, suppose that $X$ is a given smooth
$4$-manifold.  For a smooth manifold $Y$ the smooth map $f \co X \to
Y$ is called {\it stable} if for every smooth map $g$ sufficiently
close to $f$ in $C^{\infty}(X,Y)$ there are diffeomorphisms $D_X \co X
\to X$ and $D_Y \co Y \to Y$ such that $D_Y \circ f = g \circ D_X$.
Considering the special case $Y=\R ^3$, stable maps are dense in
$C^{\infty}(X,\R^3)$ and the singular set of a stable map $f \co X \to
\R^3$ is an embedded (possibly non-orientable) surface $\Si_f \subset X$.  A stable map is
called a \emph{fold} map if it has only fold singularities.  Indeed,
for a stable map $f \co X \to \R^3$ a point $p \in X$ is a
\emph{fold singularity} if $f$ can be written in some local charts
around $p$ and $f(p)$ as $(x, y, z, v) \mapsto (x^2 \pm y^2, z, v)$.
A fold singularity with ``$+$'' in this formula is called 
a \emph{definite fold singularity} and
with ``$-$'' it is called an \emph{indefinite fold singularity}.
If $X$ is a smooth $4$-manifold with nonempty boundary, then
by a stable map of $X$ into $Y$ we mean a map of $X$ into $Y$ which can be extended to
a stable map $f \co \tilde X \to Y$, where $\tilde X$ is a smooth $4$-manifold without boundary,
$X \subset \tilde X$ is a smooth submanifold and 
$f$ has no singularities in a neighborhood of $\del X$.

Let $\mathcal M \subset C^{\infty}(X,\R^3)$ be a fixed subset of stable maps
with singular set consisting only of closed orientable surfaces.  In the
following we will define a smooth invariant of $X$ denoted by $\sg_{\mathcal
  M}(X)$ in terms of the possible genera of the components of the singular
sets of the maps in $\mathcal M$.
More formally, fix an integer $k \geq 1$, take a map $f \in \mathcal M$ and
write the singular set of $f$ in the form $\bigcup_{i=1}^n \Si_f^i$, where
$\Si_f^i \subset X$ are the connected components.  For each $k$-element subset
$I=\{ i_1, \ldots, i_k \}$ of $\{1, \ldots, n\}$ denote by $g_{ I}(f)\in {\mathbb
  {N}}$ the \emph{maximal} genus of the surfaces $\Si_f^{i_1}, \ldots,
\Si_f^{i_k}$.  Define the set $G_{\max}^k(f)$ as
\[
G_{\max}^k(f)=
\{ g_{I} (f) \mid I\subset \{ 1, \ldots , n \} \text{ and } \vert I\vert=k\}.
\]
Finally, we define $\sg^k_{\mathcal M}(X)$ as
\[
\min \bigcup_{f \in \mathcal M} G_{\max}^k(f).
\]
This is a non-negative integer or it is equal to $\infty$ if the  set
$\bigcup_{f \in \mathcal M} G_{\max}^k(f)$ 
is empty.  In the next definition 
we consider only those components of the singular set of a
map in $\mathcal M$, which represent a fixed homology class and consist only
of a fixed set of singularity types. This leads us to

\begin{defn}\label{sginvdef}
  For a smooth $4$-manifold $X$ (possibly with nonempty boundary) let $A
  \subseteq H_2(X; \Z)$ be a set of second homology classes and let $\mathcal
  S$ be a fixed set of singularity types.  For a stable map $f \in \mathcal M$
  let $\bigcup _{i=1}^n \Si _f^i$ denote the union of those components of the
  singular set of $f$ which have the property that $\Si _f^i$ represents a
  homology class in $A$ and contains singularities only from $\mathcal S$.
  As before, for a fixed integer $k \geq 1$ and $I\subset \{ 1, \ldots , n\}$
  with $\vert I \vert =k$ let ${g}_{I}(f,A)$ denote the maximal genus of $\Si
  _f^i$ with $i\in I$.  Then $G^k_{\max }(f,A)$ is the set of all $g_I(f,A)$
  (where $I$ runs through the $k$-element subsets of $\{ 1, \ldots , n\}$),
  and $\sg^{k}(X, A) =\sg^{k}_{\mathcal M, \mathcal S}(X, A)$ is the minimum
  of the union $\bigcup_f {G}^{k}_{\max}(f, A)$, where $f$ runs over the
  stable maps in $\mathcal M$.
\end{defn}
\begin{rem}\noindent
\begin{enumerate}
\item
For any $k \geq 1$ we have $\sg^k(X, A) \in \Z_{\geq 0} \cup \{ \infty \}$ since
the minimum of the empty set is defined to be $\infty$.  
\item The reason for the slightly complicated definition of the invariant $\sg
  ^k$ is that in our applications we will find 4-manifold pairs with the
  property that in one $4$-manifold $k$ disjoint $(-1)$-spheres can be
  located, while in the other one we show that there are no $k$ disjoint
  $(-1)$-spheres. 
  By fixing the appropriate
  homology classes for $A$, the invariant $\sg ^k (X,A)$ will distingush these
  $4$-manifolds, cf. Theorems~\ref{generatoros} and \ref{minuszegyes}.
\item
In the case of $k=1$ the value of $\sg^1(X, A)$ is just the minimum of all
the genera of the possible allowed singular set components of the maps in $\mathcal M$.
\item
A fairly natural invariant of the same spirit is
taking  $\min_{f \in \mathcal M} \{ g(f) \}$, where
${g}(f)$ denotes the maximal genus of {\it all} the components of $\Si$ in Definition~\ref{sginvdef}.
We denote this by $\sg(X, A)$ and a slightly different version of
it will play a role  in Definition~\ref{sgpropinvdef} and Theorem~\ref{kanonikus}.
\item
For  $1 \leq k \leq l$, 
we have $\sg^k(X, A) \leq \sg^{l}(X, A)$ as it can be seen easily from the definition.
\end{enumerate}
\end{rem}

In the present paper we make two main choices for $\mathcal M$ and
$\mathcal S$:
\begin{enumerate}[(i)]
\item
let $\mathcal M$ be the set of all the stable maps with singular set
consisting only of closed orientable surfaces and let $\mathcal S$ be
the set of all the singularities, or
\item
let $\mathcal M$ be the set of all the fold maps with singular set
consisting only of closed orientable surfaces and let $\mathcal S$ be
the one element set of the definite fold singularity.
\end{enumerate}

Our results work with both choices. By
constructing specific stable maps, and estimating the invariant $\sg
^k (X, A)$ for particular manifolds and homology classes using
Seiberg-Witten theory, we prove

\begin{thm}\label{generatoros}
There exist homeomorphic smooth $4$-manifolds $X_1$ and $X_2$ with
$H_2 (X_1; \bfz )\cong H_2 (X_2; \bfz ) \cong \bfz$ such that for 
the
$2$-element set of generators $A\subset
H_2 (X_1; \bfz )$ we have
\begin{enumerate}[\rm (1)]
\item
$
\sg^{1}(X_1,  A) = 0 \mbox{ \ and \ } 0 < \sg^{1}(X_2, A) < \infty
$
and
\item
$\sg^{k}(X_1,  A)  = \sg^{k}(X_2,  A)  = \infty$ for $k \geq 2$.
\end{enumerate}
\end{thm}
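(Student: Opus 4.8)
The plan is to take for $X_1$ and $X_2$ the exotic pair furnished by Akbulut--Matveyev (Theorem~\ref{thm:am}): two homeomorphic smooth $4$-manifolds with $H_2 \cong \Z$ whose common generator $g$ has self-intersection $g^2 = -1$, where in $X_1$ the class $g$ is represented by a smoothly embedded $(-1)$-sphere, while in $X_2$ it is represented by no embedded sphere at all. The set $A$ is the pair of generators $\{g,-g\}$; since a homeomorphism induces an isomorphism on $H_2$ carrying generators to generators, $A$ is canonically defined for both manifolds. The argument splits into the two claims, with the decisive distinction between $X_1$ and $X_2$ entering only through the minimal genus of $g$, which is supplied by Seiberg--Witten theory.

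For claim (2) I would argue purely homologically, using only the shared intersection form. Every singular set of a map in $\mathcal{M}$ is an embedded surface, so two distinct connected components $\Si_f^i, \Si_f^j$ are disjoint and hence have homological intersection number $[\Si_f^i]\cdot[\Si_f^j]=0$. If both represented classes in $A=\{g,-g\}$, their product would equal $\pm g^2 = \mp 1 \neq 0$, a contradiction. Thus for any $f$ at most one component of its singular set can lie in $A$, so the filtered collection $\bigcup_{i=1}^n \Si_f^i$ of Definition~\ref{sginvdef} has $n \leq 1$; for every $k \geq 2$ there is then no $k$-element subset $I$, the set $\bigcup_f G^k_{\max}(f,A)$ is empty, and $\sg^k(X_i,A)=\infty$ for $i=1,2$.

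For claim (1) I would first treat $X_1$: applying Saeki's realization result (Theorem~\ref{thm:saeki}) to the embedded $(-1)$-sphere representing $g$ produces a map $f \in \mathcal{M}$, realizable as a fold map with definite folds only and so valid for both choices (i) and (ii), whose singular set has a connected component equal to this sphere. That component has genus $0$ and lies in $A$, whence $\sg^1(X_1,A)=0$. For $X_2$ I would prove the two-sided bound separately. The lower bound $\sg^1(X_2,A)>0$ holds because any singular component representing $\pm g$ is a closed orientable embedded surface in the class of the generator, and the adjunction inequality forces the minimal genus of $g$ in $X_2$ to be strictly positive, ruling out any genus-$0$ component. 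For the upper bound $\sg^1(X_2,A)<\infty$ I would exhibit at least one map in $\mathcal{M}$ whose singular set contains a component in the class $g$ — for instance by realizing a minimal-genus (or, if the chosen singularity type demands it, a higher-genus) embedded representative of $g$ as a singular component — making $\bigcup_f G^1_{\max}(f,A)$ nonempty and hence its minimum finite.

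The main obstacle is the positivity of the minimal genus of $g$ in $X_2$: one must run the Seiberg--Witten adjunction inequality in the bounded setting, either by embedding $X_2$ into a closed $4$-manifold carrying a nontrivial basic class pairing suitably with $g$, or via a relative adjunction inequality, in order to conclude that the generator of the exotic manifold admits no smooth spherical representative. A secondary point requiring care is checking that Saeki's construction yields the $(-1)$-sphere as an honest connected component of the singular set containing only the permitted singularity types, so that a single map witnesses $\sg^1(X_1,A)=0$ simultaneously under both choices of $(\mathcal{M},\mathcal{S})$.
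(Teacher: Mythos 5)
Your strategy coincides with the paper's: the same Akbulut--Matveyev pair $X_1,X_2$, the same disjointness-versus-intersection-form argument for $k\geq 2$ (two disjoint surfaces have algebraic intersection $0$, while any two classes in $A=\{g,-g\}$ pair to $\pm 1$), and the same two-sided treatment of $\sg^1$. Your $k\geq 2$ argument and your lower bound $\sg^{1}(X_2,A)>0$ are complete as stated; in particular, the ``main obstacle'' you isolate is not an obstacle at all, because the proof of Theorem~\ref{thm:am} (which you invoke when you describe the pair) already establishes, via Eliashberg's theorem, the Lisca--Mati\'c embedding of the Stein manifold $X_2$ into a minimal surface of general type, and the Seiberg--Witten basic class computation, that no smoothly embedded sphere represents a generator of $H_2(X_2;\Z)$; that is exactly what positivity requires.

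The genuine gap is the construction step, which you defer as ``a secondary point requiring care.'' Theorem~\ref{thm:saeki} applies only to \emph{closed} oriented $4$-manifolds, whereas $X_1$ and $X_2$ have nonempty boundary; moreover, by the paper's definition, a stable map of a bounded $4$-manifold must extend to a stable map of a boundaryless $4$-manifold containing it. So ``applying Saeki's realization result to the embedded $(-1)$-sphere'' is not a legitimate move: the numerical hypotheses $\chi(X)=\chi(F_0)-\chi(F_1)$ and $F_0\cdot F_0=3\sigma(X)$ are not even meaningful for $X_1$ itself, and no lone sphere of square $-1$ could satisfy the signature condition in any closed model. The paper's Proposition~\ref{Xenfold} supplies the missing mechanism: double $X_j$ along its boundary to obtain the closed manifold $\CP^2 \# \bCPk$; take $F_0=S\cup\bar S$, where $S$ is the core-plus-Seifert (or slice) surface and $\bar S$ its mirror, so that $F_0\cdot F_0=-1+1=0=3\sigma$; place an auxiliary null-homologous indefinite-fold surface $F_1$ of genus $1+2g(S)$ in a chart of the second copy so that $\chi(F_0)-\chi(F_1)=4=\chi(\CP^2 \# \bCPk)$; apply Saeki there; and restrict the resulting fold map to $X_j$. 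Only after this restriction does one get a map in $\mathcal M$ (for both choices (i) and (ii)) whose singular set is exactly $S$ --- the mirror surface and the indefinite folds lie in the other half of the double. Without this (or an equivalent) construction, your claims $\sg^{1}(X_1,A)=0$ and $\sg^{1}(X_2,A)<\infty$ remain unproven, since no map in $\mathcal M$ has been exhibited at all.
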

\begin{rem}
It follows easily from the proof of Theorem~\ref{generatoros} that
also $\sg(X_1,  A) = 0$ and $0 < \sg(X_2, A) < \infty$.
\end{rem}
Our example for the topological manifold $X_1$ in the above theorem is
with nonempty boundary.  For closed manifolds we show the following
result:

\begin{thm}\label{minuszegyes}
There exist homeomorphic, smooth, closed
$4$-manifolds $V$ and $W$
and there exists  $1 \leq k \leq 4$ such that 
if $A$ denotes
the set of homology classes in $H_2(V) \cong H_2(W)$ having
self-intersection $-1$, then
\begin{enumerate}[\rm (1)]
\item
$\sg^{k}(V, A) = 0$ and $0 < \sg^k(W, A) < \infty$,  and
\item
$\sg^{l}(V, A) = \sg^{l}(W, A) = 0$ for any $1 \leq l < k$.
\end{enumerate}
\end{thm}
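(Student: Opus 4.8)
The plan is to manufacture $V$ and $W$ as two smooth structures on one simply connected topological manifold whose maximal numbers of pairwise disjoint smoothly embedded $(-1)$-spheres differ, and then to convert this difference into the asserted values of $\sg^{\bullet}(\cdot,A)$. Concretely, I would start from a simply connected, minimal, non-spin complex surface $N$ of general type with geometric genus $p_g\ge 1$ (for instance a smooth quintic surface $N\subset\CP^3$, which is minimal, simply connected, non-spin and has $p_g=4$), so that $b^+(N)=2p_g+1>1$ and the Seiberg--Witten basic classes of $N$ are exactly $\pm K_N$ with $K_N^2=c_1^2(N)>0$. Writing $p=b^+(N)$ and $q=b^-(N)\ge 1$, Freedman's theorem identifies the underlying topological manifold of $N$ with the standard $p\,\CP^2\#q\,\bCPk$. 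Fix an integer $0\le m\le 3$ and set
\[
W=N\#m\,\bCPk,\qquad V=p\,\CP^2\#(q+m)\,\bCPk .
\]
Both are simply connected with the same odd intersection form, hence homeomorphic; and $V$ visibly contains the $q+m\ (\ge m+1)$ disjoint exceptional $(-1)$-spheres of its $\bCPk$-summands, while $W$ contains the $m$ exceptional $(-1)$-spheres of its own. I will take $k=m+1\in\{1,\dots,4\}$.

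The crux --- and the step I expect to be the main obstacle --- is to prove that $W$ admits no $m+1$ pairwise disjoint $(-1)$-spheres; this is where Seiberg--Witten theory enters. Suppose to the contrary that such spheres exist. Blowing them down presents $W=W'\#(m+1)\,\bCPk$, in which their classes become the exceptional classes $e_1,\dots,e_{m+1}$. The blow-up formula computes the set of basic classes of $W$ in two ways: from $W=N\#m\,\bCPk$ it equals $\{\pm K_N\pm E_1\pm\cdots\pm E_m\}$, a set of $2^{m+1}$ classes all of square $K_N^2-m$; from $W=W'\#(m+1)\,\bCPk$ it equals $\{\beta\pm e_1\pm\cdots\pm e_{m+1}:\beta\ \text{basic for}\ W'\}$. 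Matching cardinalities forces $W'$ to have exactly one basic class, while matching squares forces each such $\beta$ to satisfy $\beta^2=K_N^2+1>0$, so $\beta\ne 0$. Since $b^+(W')=b^+(N)>1$, the basic classes of $W'$ are invariant under $\beta\mapsto-\beta$ and therefore occur in genuine pairs when $\beta\ne 0$; this contradicts their being a single class. Hence the maximal number of disjoint $(-1)$-spheres in $W$ is exactly $m$, one less than in $V$.

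It remains to read off the invariants, which is now formal. For any $l\le m$ both $V$ and $W$ contain $l$ disjoint exceptional $(-1)$-spheres, and by Theorem~\ref{thm:saeki} I can construct a map in $\mathcal M$ realizing these as genus-$0$ definite-fold components carrying classes of $A$; thus $\sg^{l}(V,A)=\sg^{l}(W,A)=0$ for $1\le l<k=m+1$, giving part (2), and realizing $k\le q+m$ of the exceptional spheres of $V$ gives $\sg^{k}(V,A)=0$. For the lower bound on $W$ I need no construction: the singular set of any map in $\mathcal M$ is an embedded closed orientable surface, so a genus-$0$ component representing a class of $A$ is an embedded $(-1)$-sphere; $k=m+1$ disjoint such components would be $m+1$ disjoint $(-1)$-spheres in $W$, which we have excluded, so $\sg^{k}(W,A)>0$. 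Finally, to see $\sg^{k}(W,A)<\infty$, pick in the summand $N$ a class of square $-1$ (the image of an exceptional class under the homeomorphism) and represent it by a smoothly embedded surface $\Si$ of some genus $g\ge 1$ disjoint from the $m$ exceptional spheres; applying Theorem~\ref{thm:saeki} to the disjoint family consisting of these $m$ spheres together with $\Si$ yields a map in $\mathcal M$ with $k$ disjoint definite-fold components in $A$ of maximal genus $g$, so $\sg^{k}(W,A)\le g<\infty$. The argument is insensitive to whether one uses choice (i) or (ii) for $(\mathcal M,\mathcal S)$, since in both cases the relevant components are embedded surfaces and the realizations above are by definite folds.
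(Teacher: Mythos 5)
Your Seiberg--Witten step is correct, and it is in fact the same argument the paper itself uses: the blow-up formula and a cardinality count force the blown-down manifold to have a unique basic class, which by the symmetry $\beta\mapsto-\beta$ (available since $b^+>1$) must be zero, contradicting the computation of its square. Your homeomorphism claim via Freedman is also fine, and your pair $(V,W)=(p\,\CP^2\#(q+m)\,\bCPk,\;N\#m\,\bCPk)$ is a legitimate variant of the paper's cut-and-paste pair. The genuine gap is in every place where you invoke Theorem~\ref{thm:saeki}: you treat it as if it could realize an arbitrary disjoint family of embedded surfaces as definite fold components, but the theorem is an equivalence whose conditions (1), (2) and (5) constrain the \emph{entire} singular set $F_0\cup F_1$ --- the class $[F_0\cup F_1]$ must be Poincar\'e dual to $w_2(X)$, and the total self-intersection of $F_0$ must equal $3\sigma(X)$. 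Your chosen spheres (and, for the finiteness bound, your genus-$g$ surface) must first be completed to a global configuration satisfying all of (1)--(5) before the theorem produces any map, and nothing in your proposal does this.

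Worse, this is not a bookkeeping omission: the conditions are obstructed for most of your manifolds. Since maps in $\mathcal M$ have orientable singular sets, $[F_0]+[F_1]$ is an integral characteristic class (condition (2)) whose square equals $[F_0]^2=3\sigma(X)$ (conditions (4), (5) and disjointness), and any characteristic class of a unimodular form has square congruent to the signature mod $8$; hence the existence of even one fold map in $\mathcal M$ on a closed $X$ forces $\sigma(X)\equiv 0 \pmod 4$. For the quintic, $\sigma(N)=-35$, so $\sigma(W)=\sigma(V)=-35-m$ is divisible by $4$ only when $m\equiv 1\pmod 4$; for $m\in\{0,2,3\}$ neither $V$ nor $W$ carries \emph{any} fold map with orientable singular set, so for choice (ii) all the invariants $\sg^{l}(\cdot,A)$ equal $\infty$ and your claimed values are false (and for choice (i) you have supplied no construction at all). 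This is exactly why the paper blows up the Lisca--Mati\'c surface $Z$ between $0$ and $3$ times to arrange $4\mid\sigma(W)$, and then proves Lemma~\ref{szigmakar}: it exhibits an explicit characteristic class $\Sigma$ of square $3\sigma(W)$ (a hyperbolic pair with coefficients $2$ and $2k$ absorbs the discrepancy $3\sigma-\sigma$) which splits off the required disjoint $(-1)$-sphere components plus a square $-1$ class giving the finiteness bound. Your construction could be repaired for $m=1$ (where $\sigma=-36$), but even then you would need to supply the analogue of Lemma~\ref{szigmakar}, the disjoint-representative argument for the pieces of $F_0$, and the null-homologous $F_1$ matching the Euler characteristic condition (1); none of this appears in the proposal.
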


Similar results can be derived by looking at the genera of singular
sets of smooth maps satisfying some property related to the boundary
$3$-manifold of the source. To state this result, we need the
following definition.

\begin{defn}\label{sgpropinvdef}
  A stable map $f \co X \to \R^3$ with $\del X \neq \emptyset$ 
   induces a stable framing $\phi$
  on $\del X$.  Let $\mathcal A$ be the property that the stable
  framing induced on the boundary of the source is canonical, i.e.,
  has minimal total defect in the sense of \cite{KiMe}. (For
  discussions of these notions, see also Section~\ref{sec:defect}.)
  For a $4$-manifold $X$ and a stable map $f \co X \to \R^3$ with
  singular set  consisting only of closed orientable surfaces
  let ${\rm g}(f)$ denote the maximal genus of the components of
  the {definite fold} singular set.  Let $\sg(X, \mathcal A)$ denote the minimum of the values
  ${\rm g}(f)$, where $f$ runs over the {fold} maps of $X$
  into $\R^3$ with property $\mathcal A$ and with singular set
  consisting only of closed orientable surfaces.
\end{defn}

With this notion at hand, now we can state our next result:

\begin{thm}\label{kanonikus}
There exist homeomorphic smooth compact $4$-manifolds $X_1$ and $X_2$ such that
$\sg(X_1, \mathcal A) = 0$ and $0 < \sg(X_2, \mathcal A) < \infty$.
\end{thm}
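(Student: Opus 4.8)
The plan is to take $X_1$ and $X_2$ to be the Akbulut--Matveyev pair of Theorem~\ref{thm:am}: homeomorphic compact $4$-manifolds with boundary for which $X_1$ carries a Stein structure while $X_2$ carries none, the latter obstruction resting on Seiberg--Witten theory. The whole argument then reduces to a single dictionary, which I would set up as the two implications: a fold map $f \co X \to \R^3$ with property $\mathcal A$ (canonical, minimal-defect induced framing on $\del X$) and with \emph{every} definite fold component of genus $0$ should correspond to a Lefschetz fibration of $X$ over $D^2$ with bounded fibers, equivalently (by Loi--Piergallini \cite{LP} and Akbulut--Ozbagci \cite{AO}) to a Stein structure. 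Granting both directions, the theorem follows: $\sg(X_1,\mathcal A)=0$ because $X_1$ is Stein, and $\sg(X_2,\mathcal A)>0$ because $X_2$ is not.

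For $X_1$ I would argue as follows. Its Stein structure yields, by \cite{LP, AO}, a Lefschetz fibration over $D^2$ whose fibers are compact surfaces with boundary. Feeding this fibration into Saeki's construction (Theorem~\ref{thm:saeki}) produces a fold map $f \co X_1 \to \R^3$ whose definite fold set is a disjoint union of $2$-spheres and whose induced framing on $\del X_1$ is canonical. For such an $f$ we have $\mathrm g(f)=0$, and since $\mathrm g$ is a non-negative integer this forces $\sg(X_1,\mathcal A)=0$.

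The finiteness $\sg(X_2,\mathcal A)<\infty$ only requires \emph{some} fold map of $X_2$ into $\R^3$ with property $\mathcal A$ and closed orientable singular set. Such a map exists by the standard existence results for fold maps of compact $4$-manifolds with boundary, and the induced boundary framing can then be forced to be canonical after a controlled modification (stabilizing by attaching canceling fold pairs), at the possible cost of raising the genus of the definite fold components but keeping it finite. Hence $X_2$ belongs to the relevant family with property $\mathcal A$ and $\sg(X_2,\mathcal A)$ is finite.

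The core of the proof, and the step I expect to be the main obstacle, is the positivity $\sg(X_2,\mathcal A)>0$, which needs the \emph{converse} direction of the dictionary. Suppose for contradiction that $X_2$ admits a fold map $f$ with property $\mathcal A$ and $\mathrm g(f)=0$, so that every definite fold component is a $2$-sphere. Near a definite fold, $f$ looks like $(x,y,z,v)\mapsto(x^2+y^2,z,v)$, so on the filled side the preimages are circles bounding disks and the definite fold region is a $D^2$-bundle over the fold surface; the genus-$0$ hypothesis makes these bundles over spheres, providing the $0$- and $1$-handle portion of a handle decomposition. I would then use the Stein factorization $X_2\to W_f\to\R^3$ together with the canonical-framing hypothesis to organize the indefinite folds into the Lefschetz handles of a bounded-fiber Lefschetz fibration of $X_2$ over $D^2$; here the minimal-defect condition on the induced framing is exactly what should guarantee that the resulting fibration is honest rather than achiral and that it closes up correctly along $\del X_2$. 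By \cite{LP, AO} this fibration would give $X_2$ a Stein structure, contradicting Theorem~\ref{thm:am}. The delicate points will be (i) showing that the canonical-framing condition rules out the ``wrong'' (achiral) handle attachments, and (ii) verifying that the genus-$0$ definite-fold hypothesis is enough to recover bounded fibers; these are precisely where property $\mathcal A$ and Saeki's analysis must be combined, and where Seiberg--Witten theory enters, through Theorem~\ref{thm:am}, to complete the contradiction.
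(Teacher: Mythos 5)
Your proposal is built on a dictionary that is not only unproven but is contradicted by the very theorem you are trying to prove, because you have the roles of the two manifolds backwards. In the Akbulut--Matveyev-type pair used in the paper, the Stein manifold is $X_2$ (the $2$-handlebody on the knot $K_2$ admitting a Legendrian representative with ${\mathrm{tb}}=0$), while $X_1$ (the handlebody on the slice knot $K_1$) cannot be Stein: its second homology is generated by a smoothly embedded $(-1)$-sphere, and Stein domains embed into minimal surfaces of general type, which contain no such sphere. The theorem asserts $\sg(X_1,\mathcal A)=0$ and $\sg(X_2,\mathcal A)>0$; that is, the \emph{non-Stein} manifold is the one admitting a fold map with property $\mathcal A$ and spherical definite folds, and the \emph{Stein} manifold is the one admitting no such map. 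So your proposed equivalence ``genus-$0$ definite folds with canonical framing $\Longleftrightarrow$ bounded-fiber Lefschetz fibration $\Longleftrightarrow$ Stein'' fails in both directions on this very pair, and the contradiction step at the core of your positivity argument cannot be repaired. You appear to have conflated the motivational remark in the introduction --- that the Stein member of such a pair admits a Lefschetz fibration while the other does not --- with the content of Theorem~\ref{kanonikus}, where the invariant $\sg(\cdot,\mathcal A)$ separates the pair in the \emph{opposite} direction. Two further steps also do not work as stated: Theorem~\ref{thm:saeki} is a statement about closed manifolds with prescribed embedded surfaces, so there is no mechanism for ``feeding a Lefschetz fibration into Saeki's construction''; and your finiteness step for $X_2$ (stabilizing by cancelling fold pairs until the boundary framing becomes canonical) is asserted without justification.

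What $\sg(\cdot,\mathcal A)$ actually detects is representability of the generator of $H_2$ by an embedded sphere, and Steinness enters only as the source of the \emph{obstruction}, via Seiberg--Witten theory. The paper's proof runs as follows. Both $X_i$ admit fold maps with property $\mathcal A$ by applying Theorem~\ref{thm:saeki} to the double $X_i\cup_{\del}{\bar X_i}\cong \CP^2\#\bCPk$ with $F_0=S\cup\bar S$ and a suitable null-homologous $F_1$, then restricting to $X_i$ (Proposition~\ref{Xencandeffold}); Proposition~\ref{foldtotaldefect} shows the induced boundary framing has total defect $(0,2)$, hence is canonical, which gives finiteness for both manifolds. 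For $X_1$, sliceness of $K_1$ lets one take $S$ to be a sphere, so $\sg(X_1,\mathcal A)=0$. For positivity on $X_2$ no fibration is reconstructed: if $X_2$ had such a map with all definite fold components of genus $0$, Proposition~\ref{foldtotaldefect} computes the total defect as $\bigl(2s,\,3-\sum_i k_i^2\bigr)$, where $\Si_i$ represents $\pm k_i$ times the generator, and the Kirby--Melvin classification of canonical framings on a homology sphere forces this to be $(0,\pm 2)$ or $(0,0)$, hence $\sum_i k_i^2\in\{1,3,5\}$ and some $k_i=\pm 1$. That component would then be an embedded sphere of genus $0$ generating $H_2(X_2;\Z)$, which is impossible precisely \emph{because} $X_2$ is Stein (Lisca--Mati\'c embedding plus the basic-class computation for minimal surfaces of general type, as in Theorem~\ref{thm:am}). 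This total-defect arithmetic is the key idea missing from your proposal.
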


The prominent example of a pair $(X_1,X_2)$ of smooth $4$-manifolds
(with nonempty boundary) used in the above results was found by
Akbulut and Matveyev \cite{AM}. In order to show that our method is
applicable in further examples, we extend the examples of \cite{AM}:
\begin{thm}\label{thm:vegtelen}
There is an infinite family $(X_1(n), X_2(n))_{n\in \bfn}$ of
homeomorphic, non-diffeomorphic compact $4$-manifold-pairs which are
non-homeomorphic for different $n$'s and Theorem~\ref{generatoros}
distinguishes the smooth structure of $X_1(n)$ from the smooth
structure of $X_2 (n)$.
\end{thm}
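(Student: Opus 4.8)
The plan is to promote the single Akbulut--Matveyev pair underlying Theorem~\ref{generatoros} to a parametrized family by a boundary connected sum with auxiliary contractible Stein pieces, chosen so that the two features responsible for the distinction in Theorem~\ref{generatoros} survive while the homeomorphism type varies with $n$. Write $(X_1,X_2)$ for the pair of Theorem~\ref{generatoros}, with generator $a$ of $H_2(X_i;\Z)\cong\Z$ satisfying $a^2=-1$ and $A=\{\pm a\}$: recall that $X_1$ contains a smoothly embedded $(-1)$-sphere in the class $a$ (so $\sg^1(X_1,A)=0$, via the fold map produced from Theorem~\ref{thm:saeki}), whereas $X_2$ is Stein, so the Seiberg--Witten obstruction forces $0<\sg^1(X_2,A)<\infty$. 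I would fix an infinite family $\{Y_n\}_{n\in\N}$ of compact contractible Stein $4$-manifolds whose boundaries $\del Y_n$ are pairwise non-diffeomorphic irreducible integral homology $3$-spheres, and set
\[
X_i(n)=X_i\,\natural\,Y_n,\qquad i=1,2.
\]

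First I would record the topological bookkeeping. Since $Y_n$ is contractible, $H_2(X_i(n);\Z)\cong H_2(X_i;\Z)\cong\Z$ with the same generator $a$, so $(X_1(n),X_2(n))$ is again a pair of the kind treated in Theorem~\ref{generatoros}, with $A=\{\pm a\}$. Because $X_1\cong X_2$ topologically, boundary connected sum with the same summand $Y_n$ yields $X_1(n)\cong X_2(n)$, giving the homeomorphism within each pair. For the homeomorphism type across parameters, note $\del X_i(n)=\del X_i\,\#\,\del Y_n$, where $\del X_1\cong\del X_2$; by uniqueness of prime decomposition the varying summand $\del Y_n$ separates these boundaries over distinct $n$, so they are pairwise non-diffeomorphic (equivalently, when the boundaries are homology spheres, by additivity of the Casson invariant under connected sum). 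As any homeomorphism of $4$-manifolds restricts to the boundary, the manifolds $X_i(n)$ are pairwise non-homeomorphic for different $n$.

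Next I would check that the $\sg^1$-distinction persists for every $n$. On the $X_1$-side the $(-1)$-sphere lies disjointly from the $Y_n$-summand, so the fold map from Theorem~\ref{thm:saeki} still realizes it as a genus-$0$ definite-fold component in the class $a$, giving $\sg^1(X_1(n),A)=0$; the equalities $\sg^k(\,\cdot\,,A)=\infty$ for $k\geq2$ are inherited, since two disjoint closed surfaces representing classes of $A$ would have intersection number $\pm a^2=\mp1\neq0$. On the $X_2$-side, the boundary connected sum of Stein domains is again Stein, so $X_2(n)$ is Stein; via the Lisca--Mati\'c embedding into a minimal K\"ahler surface of general type, the class $a$ admits no $(-1)$-sphere (the ambient surface being minimal), hence no genus-$0$ representative, and the Seiberg--Witten obstruction of Theorem~\ref{generatoros} (the adjunction inequality) applies unchanged and forces $\sg^1(X_2(n),A)>0$, while an explicit fold map keeps the value finite. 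Thus each pair $(X_1(n),X_2(n))$ is distinguished exactly as in Theorem~\ref{generatoros}, and in particular the two manifolds are non-diffeomorphic.

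The main obstacle is precisely this persistence step: one must vary the homeomorphism type without destroying the Seiberg--Witten/Stein obstruction on the $X_2$-side or the embedded $(-1)$-sphere on the $X_1$-side. This is the reason for using a \emph{contractible Stein} summand together with the boundary connected sum operation: contractibility leaves $H_2$ and the generator $a$ untouched, the boundary connected sum preserves both the embedded sphere and (by the standard gluing of Stein structures) the Stein filling, and it modifies only the boundary, which is where the family is separated. The remaining point is to produce the family $\{Y_n\}$ with pairwise distinct boundary homology spheres admitting Stein fillings simultaneously --- for instance by attaching a single Stein $2$-handle to the standard Stein $1$-handlebody along a suitable family of Legendrian knots, so that the algebraically cancelling $2$-handle keeps $Y_n$ contractible while varying the knot varies $\del Y_n$ --- which ensures mutual non-homeomorphism for different $n$ and leaves the within-pair distinction of Theorem~\ref{generatoros} intact.
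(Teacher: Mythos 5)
Your route is genuinely different from the paper's: rather than constructing new knot pairs, you stabilize the single Akbulut--Matveyev pair by boundary connected sum with contractible Stein manifolds $Y_n$, so that all the variation is pushed into the boundary $3$-manifold. The skeleton is fine (contractibility of $Y_n$ preserves $H_2$ and the class $a$; boundary connected sum of Stein domains is Stein, so the Lisca--Mati\'c/Seiberg--Witten obstruction survives on the $X_2$-side; uniqueness of prime decomposition separates the boundaries), but there is a genuine gap at the step that carries all the content: the existence of infinitely many compact contractible Stein $4$-manifolds $Y_n$ whose boundaries are pairwise non-homeomorphic irreducible homology spheres. Your closing sketch --- attach an algebraically cancelling Stein $2$-handle to $S^1\times D^3$ along a varying Legendrian knot --- does produce contractible Stein manifolds, but the claim that this ``ensures mutual non-homeomorphism'' of the $\del Y_n$ is an assertion, not a proof: distinct attaching knots can perfectly well yield the same $3$-manifold, and nothing in your construction certifies pairwise distinctness. (Your parenthetical appeal to additivity of the Casson invariant has the same problem: it separates the boundaries only if the numbers $\la(\del Y_n)$ are pairwise distinct, which is again exactly what must be proven.)

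This missing certification is precisely where the paper does its real work: it exhibits explicit knots $K_1(n), K_2(n)$, checks that each pair satisfies the Akbulut--Matveyev properties, and then spends Proposition~\ref{prop:veg} together with the entire Appendix computing the Ohtsuki invariant $\la_2\bigl(S^3_{-1}(K_2(n))\bigr)=72n+270$ from the Conway and Jones polynomials --- that computation is what makes the family infinite and the pairs non-homeomorphic for different $n$. Your proposal has displaced, not discharged, this step; it could be repaired by invoking a known family of Stein corks whose boundaries are distinguished by a computed invariant, but as written the clauses ``infinite family'' and ``non-homeomorphic for different $n$'' are unproven. A secondary, fixable point: you apply Theorem~\ref{thm:saeki} directly to $X_1(n)$, but that theorem concerns closed manifolds; the paper passes through Proposition~\ref{Xenfold}, whose proof uses that the double of a single $2$-handlebody is $\CP^2\#\bCPk$. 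The double of $X_i\natural Y_n$ is instead $(\CP^2\#\bCPk)\# D(Y_n)$ with $D(Y_n)$ only a homotopy $4$-sphere, so Saeki's conditions (1)--(5) must be reverified there (they do hold, since $\chi$, $\si$ and $w_2$ are unaffected) and the restriction argument repeated; the same remark applies to the fold map on $X_2(n)$ needed to keep $\sg^1(X_2(n),A)$ finite.
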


The paper is organized as follows. In Section~\ref{sec:two} we show
infintely many examples for which Theorems~\ref{generatoros},
\ref{minuszegyes} and \ref{kanonikus} apply, and hence provide a proof
of Theorem~\ref{thm:vegtelen}. (A tedious computation within the proof
of this theorem is deferred to an Appendix in
Section~\ref{sec:appendix}.)  In Sections~\ref{sec:three} and
\ref{sec:defect} we provide the proofs of the results described
in the Introduction.

\bigskip

{\bf Acknowledgements:} The authors were supported by OTKA NK81203 and
by the \emph{Lend\"ulet program} of the Hungarian Academy of Sciences.
The first author was partially supported by Magyary Zolt\'an
Postdoctoral Fellowship. The second author was partially supported by the ERC
Grant LDTBud. He also wishes to thank the Institute for Advanced Study
for the stimulating research environment while part of this work has been
performed.

\section{An infinite family of exotic $4$-manifold pairs}
\label{sec:two}

To make the proofs of Theorems~\ref{generatoros}, \ref{minuszegyes}
and \ref{kanonikus} of the Introduction more transparent, we start by
describing the examples promised in Theorem~\ref{thm:vegtelen}.  The
following idea of constructing exotic pairs of $4$-manifolds is due to
Akbulut-Matveyev \cite{AM}. Suppose that $K_1, K_2$ are two given
knots in $S^3$ with the following properties:

\begin{itemize}
\item the $3$-manifold given by $(-1)$-surgery on $K_1$
is diffeomorphic to the $3$-manifold given by $(-1)$-surgery
on $K_2$,
\item $K_1$ is slice, that is, bounds a properly, smoothly embedded
disk in $D^4$, and
\item the maximal Thurston-Bennequin number of $K_2$ is nonnegative,
in particular, there is a Legendrian knot $L$ (in the standard 
contact $S^3$) which is smoothly isotopic to $K_2$ and tb$(L)=0$.
(For the definition of the Thurston-Bennequin number, see for example
\cite{OS}.)
\end{itemize}

An example for such a pair $(K_1, K_2)$ was found by Akbulut and
Matveyev in \cite{AM} (cf.\ also \cite[Theorem~11.4.8]{GS99}).  In the
following, $X_i$ will denote the smooth $4$-manifold obtained by
attaching a $4$-dimensional $2$-handle to $D^4$ along $K_i$ with
framing $-1$ for $i=1,2$.  As it was shown in \cite{AM}, the
properties listed above allow us to prove that
\begin{thm}\label{thm:am}(\cite{AM})
The smooth $4$-manifolds $X_1, X_2$ are homeomorphic but not diffeomorphic.
\end{thm}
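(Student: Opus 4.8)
The plan is to split the statement into its two halves, which require quite different tools. For the homeomorphism, I first observe that each $X_i = D^4 \cup (\text{a } 2\text{-handle along } K_i)$ is a simply connected compact $4$-manifold: attaching a single $2$-handle to $D^4$ cannot create fundamental group, and it produces $H_2(X_i;\Z)\cong\Z$ with intersection form $\langle -1\rangle$, the generator being represented by the core of the handle capped off by a Seifert surface, whose self-intersection is the attaching framing $-1$. By the first hypothesis the two boundaries $\del X_i$, being $(-1)$-surgery on $K_1$ and on $K_2$, are diffeomorphic to a common $3$-manifold $Y$. I would then invoke Freedman's classification in the relative setting (Boyer's theorem on simply connected $4$-manifolds with a prescribed boundary): two simply connected compact $4$-manifolds with homeomorphic boundary and isometric intersection forms, compatibly with the boundary identification, are homeomorphic. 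Since both forms equal $\langle -1\rangle$ and both boundaries equal $Y$, this yields a homeomorphism between $X_1$ and $X_2$.

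The non-diffeomorphism is where the hypotheses on $K_1$ and $K_2$ enter asymmetrically, and this is the heart of the argument. On the $X_1$ side I would use sliceness to produce a smoothly embedded sphere of square $-1$. If $D\subset D^4$ is a properly, smoothly embedded slice disk with $\del D = K_1$, then gluing $D$ to the core disk $C$ of the $2$-handle (whose boundary is also $K_1 \subset S^3 = \del D^4$) along $K_1$ yields a smoothly embedded $2$-sphere $S = D\cup_{K_1} C \subset X_1$. Its homology class generates $H_2(X_1;\Z)$, and its self-intersection equals the attaching framing, so $[S]^2 = -1$. Thus $X_1$ contains a smoothly embedded $(-1)$-sphere generating $H_2$.

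On the $X_2$ side I would rule out any such sphere using Stein theory together with Seiberg-Witten invariants. Because there is a Legendrian representative $L$ of $K_2$ with $\mathrm{tb}(L)=0$ and the handle is attached with framing $-1 = \mathrm{tb}(L)-1$, Eliashberg's characterization of Stein handlebodies shows that $X_2$ carries a Stein structure. A Stein domain embeds (for instance by the work of Lisca-Matic) into a minimal complex surface $Z$ of general type with $b_2^+(Z)>1$. By Seiberg-Witten theory such a $Z$ contains no smoothly embedded $(-1)$-sphere: its only basic classes are $\pm K_Z$, and a $(-1)$-sphere would exhibit $Z$ as $Z_0 \mathbin{\#} \bCPk$, contradicting minimality through the blow-up formula. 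Hence $X_2$ itself contains no smoothly embedded sphere of square $-1$. If $X_1$ and $X_2$ were diffeomorphic, the sphere $S$ would transport to such a sphere in $X_2$, a contradiction; therefore $X_1$ and $X_2$ are not diffeomorphic.

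The main obstacle is this last step: showing that $X_2$ admits no smoothly embedded $(-1)$-sphere. It requires both the Stein structure, which captures the asymmetry between a slice knot and one with nonnegative maximal Thurston-Bennequin number, and the Seiberg-Witten input that a minimal surface of general type is rigid with respect to such spheres. By contrast, the homeomorphism and the construction of $S$ are comparatively routine once the relative topological classification and the slice-disk-plus-core picture are in place.
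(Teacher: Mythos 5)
Your proposal is correct and follows essentially the same route as the paper: the homeomorphism via Freedman-type classification for simply connected $4$-manifolds with (integral homology sphere) boundary, and the non-diffeomorphism by exhibiting a smoothly embedded $(-1)$-sphere in $X_1$ from the slice disk plus handle core, while ruling one out in $X_2$ via Eliashberg's Stein criterion, the Lisca--Mati\'c embedding into a minimal surface of general type, and the Seiberg--Witten basic class/blow-up argument. The only cosmetic difference is that you cite Boyer's relative classification where the paper cites the Freedman--Quinn extension to homology sphere boundaries; both apply here since $\del X_i$ is $(-1)$-surgery on a knot, hence an integral homology sphere.
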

For the convenience of the reader, we include a short outline of the
proof of this theorem.
\begin{proof}
Since both $X_1$ and $X_2$ are given as a single $2$-handle attachment
to $D^4$, both are simply connected. Since the surgery coefficient
fixed on $K_j$ in both cases is $(-1)$, the boundaries $\partial X_1$
and $\partial X_2$ (which are assumed to be diffeomorphic) are
integral homology spheres. Furthermore the intersection forms
$Q_{X_1}$ and $Q_{X_2}$ can be represented by the $1\times 1$ matrix
$\langle -1\rangle$, in particular, are isomorphic. The extension of
Freedman's fundamental result on topological $4$-manifold to the case of
$4$-manifolds with integral homology sphere boundary (and trivial
fundamental group) \cite{FQ} then implies that $X_1$ and $X_2$ are
homeomorphic.

Since $K_1$ is a slice knot, the generator of $H_2(X_1; \bfz )$ can be
represented by an embedded sphere. On the other hand, since $K_2$ is
smoothly isotopic to a Legendrian knot with Thurston-Bennequin number
tb=0, the famous theorem of Eliashberg \cite{Eli} (cf.\ also \cite{CE})
implies that $X_2$ admits a Stein structure. Since Stein manifolds
embed into minimal surfaces of general type \cite{LM}, and a minimal
surface of general type does not contain a smoothly embedded sphere
with homological square $-1$, we conclude that the generator of $H_2(
X_2; \bfz )$ cannot be represented by a smoothly embedded sphere.
(The statement about minimal surfaces of general type relies on a
computational fact in Seiberg-Witten theory: minimal surfaces of
general type have two basic classes $\pm c_1\in H^2$, but $c_1^2>0$ for those
surrfaces.)  This conclusion, however, shows that $X_1$ and $X_2$ are
non-diffeomorphic.
\end{proof}

The example of Akbulut-Matveyev can be generalized to an
infinite sequence of pairs of knots which we describe presently.
\begin{figure}[ht] 
\begin{center} 
\epsfig{file=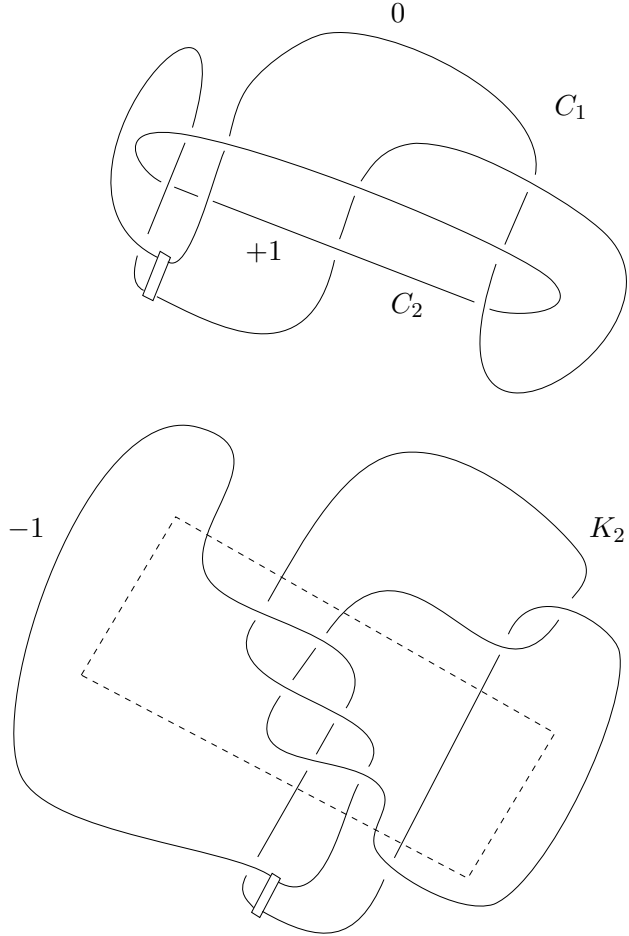, height=12cm} 
\put(-235, 150){$-1$}
\put(-15, 150){$K_2$}
\put(-90, 235){$C_2$}
\put(-28, 310){$C_1$}
\put(-90, 345){$0$}
\put(-145, 255){$+1$}
\end{center} 
\caption{{\bf The framed link $(C_1, C_2)$.}  In the upper diagram $C_1$
  has framing $0$ while the unknot $C_2$ has framing $+1$.  The small
  box in the upper figure represents a tangle which will be specified
  later. The lower diagram shows the result of the blow-down of the
  $(+1)$-framed unknot $C_2$, and hence the framing of $K_2$ is equal to
  $(-1)$.  The dashed box shows the location of the blow-down.}
\label{elsolink_lefujas}  
\end{figure}
Consider the $2$-component link $(C_1, C_2)$ given by the upper diagram
of Figure~\ref{elsolink_lefujas}.  The small box intersecting $C_1$ in
the lower left corner of the diagram contains a tangle which will be
specified later.  Equip $C_1$ with framing $0$ and $C_2$ with framing
$+1$. If we blow down the unknot $C_2$, we get a knot $K_2$, depicted by
the lower picture of Figure~\ref{elsolink_lefujas}.  It is easy to see
that the framing of $K_2$ is equal to $-1$. By isotoping the result
slightly we get the front projection of a Legendrian knot isotopic to
$K_2$ as in Figure~\ref{csomo}.
\begin{figure}[ht] 
\begin{center} 
\epsfig{file=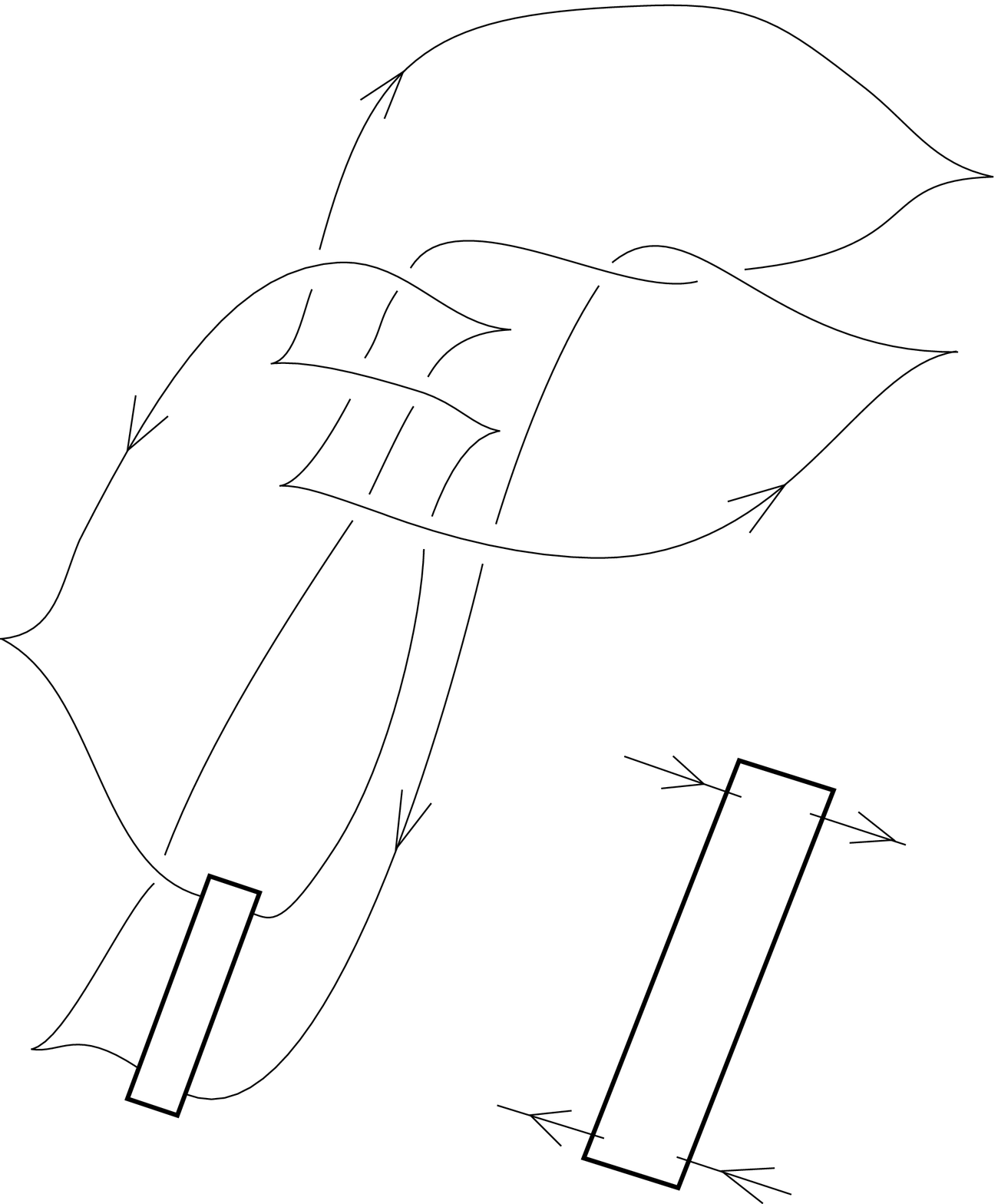, height=11cm} 
\end{center} 
\caption{{\bf The knot $K_2$ can be isotoped to be the front
    projection of a Legendrian knot.} Indeed, by considering an
  appropriate modul in the box, the resulting Legendrian knot will
  have vanishing Thurston-Bennequin number.}
\label{csomo}  
\end{figure} 
Simple computation of the writhe and the number of cusps shows that
for any module in the small box with non-negative tb, the resulting
knot $K_2$ will have non-negative tb. In particular, if we use the
module shown by Figure~\ref{modulpelda2} then we get a Legendrian
\begin{figure}[ht] 
\begin{center} 
\epsfig{file=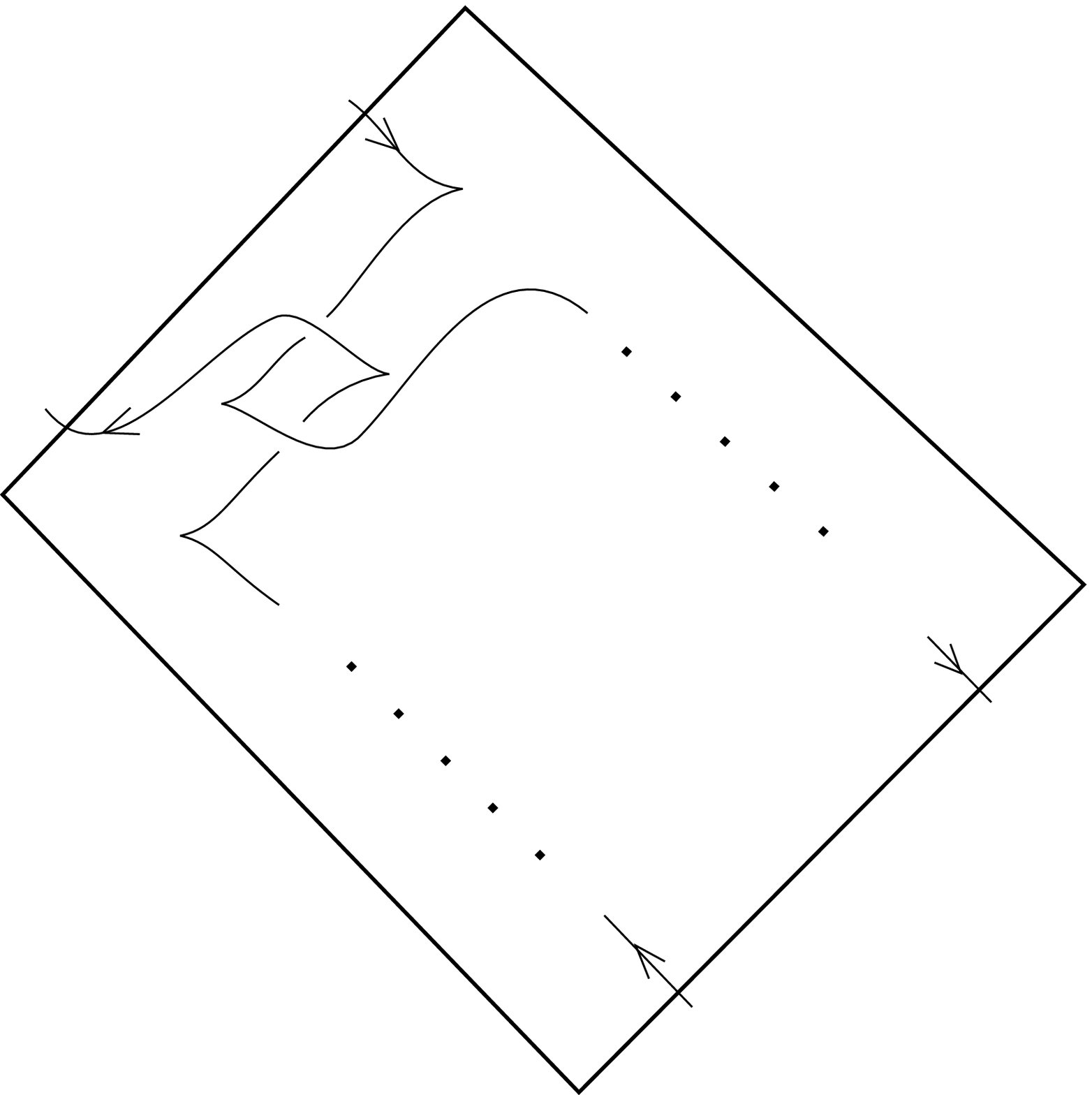, height=8cm} 
\end{center} 
\caption{{\bf The module with $n$ full left twists provides knots
    $K_2(n)$ with vanishing Thurston-Bennequin numbers.} In the
  diagram the module is already in Legendrian position. Obviously, by
  addig a left- and a right-cusp to the diagram we get a Legendrian
  unknot with Thurston-Bennequin number equal to $-1$.}
\label{modulpelda2}  
\end{figure} 
realization of $K_2$ with vanishing Thurston-Bennequin number.  If we
insert the module of Figure~\ref{modulpelda2} with $n$ full left
twists into the box of Figure~\ref{csomo}, the resulting knot will be
denoted by $K_2(n)$. Notice that with this choice of the module, $C_1$
is an unknot. By isotoping $C_1$ together with the unknot $C_2$ (as it
is shown by Figure~\ref{linkekizotopok})
\begin{figure}[ht] 
\begin{center} 
\epsfig{file=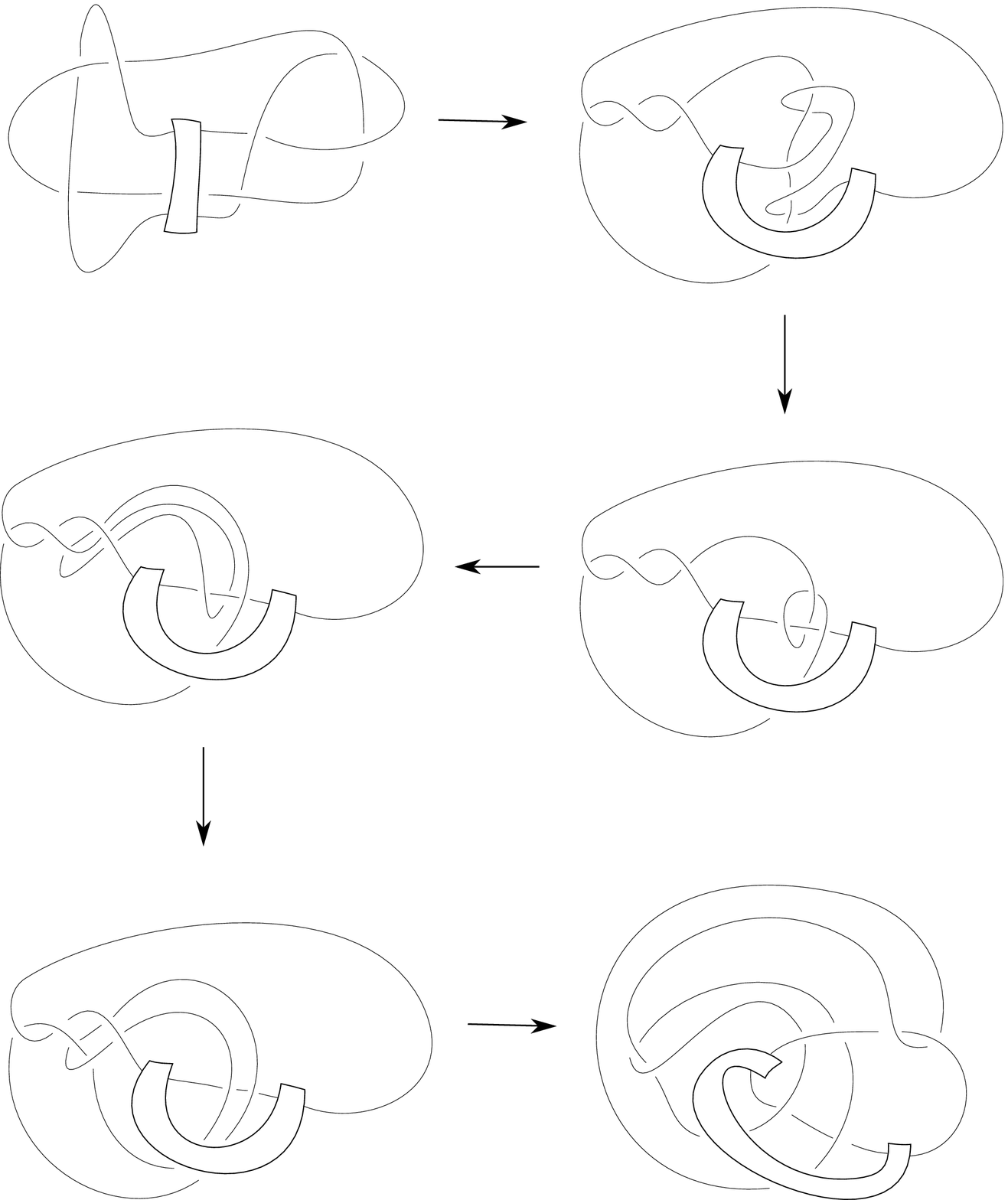, height=14cm} 
\end{center} 
\caption{{\bf The isotopy on the link of
    Figure~\ref{elsolink_lefujas}.} The diagrams show the intermediate
  stages of the isotopy transforming the link of
  Figure~\ref{elsolink_lefujas} into the link of
  Figure~\ref{masiklink}.}
\label{linkekizotopok}  
\end{figure} 
and then surgering out the $0$-framed unknot $C_1$ to a $1$-handle, we get
Figure~\ref{masiklink}.
\begin{figure}[ht] 
\begin{center} 
\epsfig{file=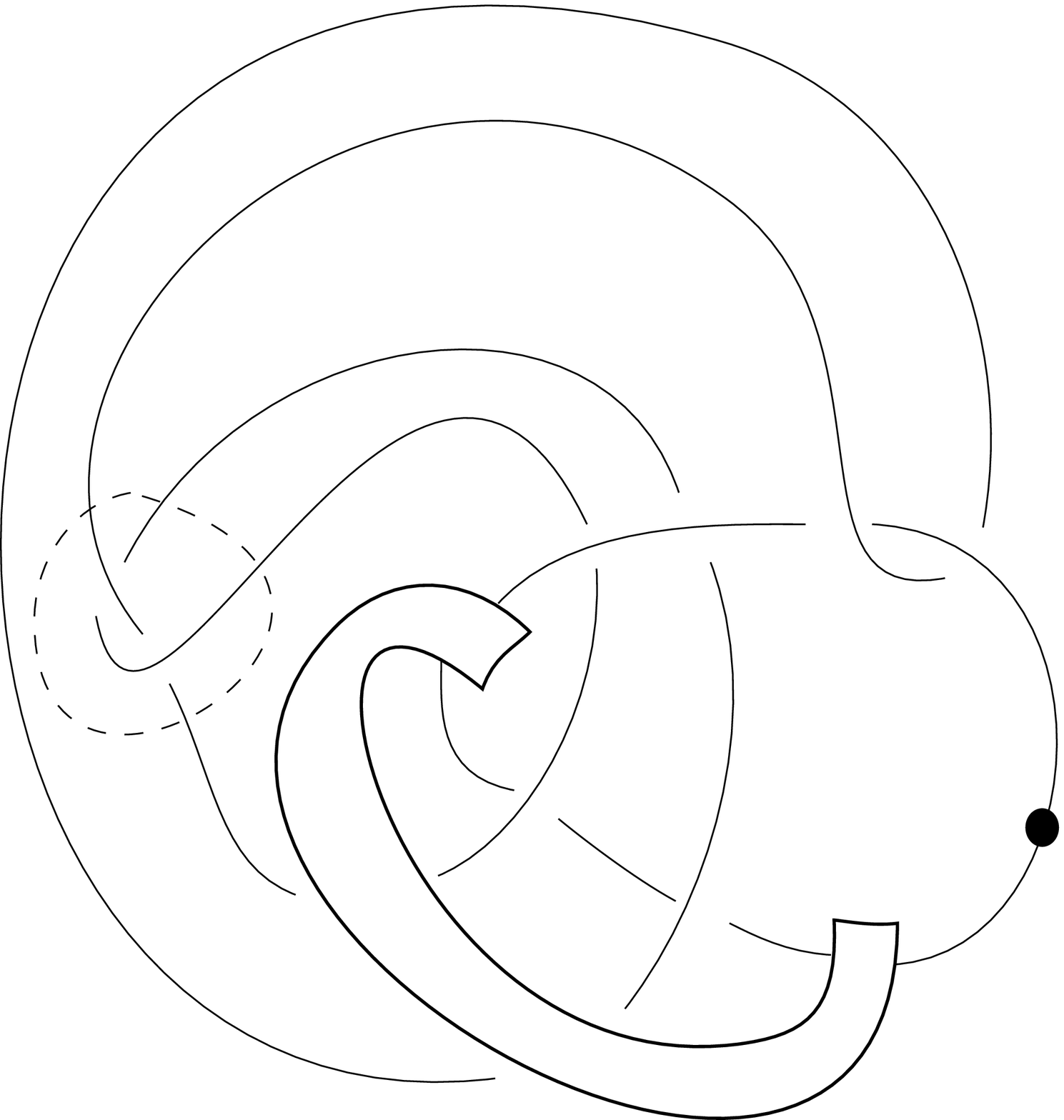, height=7cm} 
\put(-30, 185){$C_2$}
\end{center} 
\caption{ After blowing up the clasp in the dashed circle, the two
  handles will form a cancelling pair, showing that the unknot
  introduced by the blow-up gives a slice knot in $S^3$.}
\label{masiklink}  
\end{figure} 
By blowing up $C_2$ at the dashed circle on the picture, and then
pulling the resulting $(-1)$-framed unknot through the $1$-handle, we
get a slice knot $S$ with framing $(-1)$. Indeed, after the blow-up,
the $1$-handle resulting from the surgery along $C_1$ and the
$2$-handle attached along $C_2$ can be isotoped to have geometric
linking one, hence these handles form a cancelling pair. (We do not
draw an explicite diagram of the slice knot $S$ here, cf.\ also
Remark~\ref{rem:kesobb}.)  Now take $K_1$ to be equal to this knot
$S$; more precisely, if we use the module of Figure~\ref{modulpelda2}
(with $n$ full left
twists in the module)
in the small box of Figure~\ref{masiklink}, then denote the resulting knot by $K_1(n)$.  Denote
the $4$-manifold we get by attaching a $2$-handle to $D^4$ along
$K_j(n)$ by $X_j(n) $ ($j\in \{ 1,2\}, n\in \bfn $).  Notice that
since we only blew up and down, isotoped and surgered a $0$-framed
$2$-handle, the fact that the boundaries of the $4$-manifolds $X_1(n)$
and $X_2(n)$ are diffeomorphic $3$-manifolds is a simple fact (since
both are diffeomorphic to the boundary of the $4$-manifold we get from
Figure~\ref{elsolink_lefujas}).  The original example of Akbulut and
Matveyev is the pair $(K_1(n), K_2(n))$ for $n=0$.  The same argument
as the one presented in the proof of Theorem~\ref{thm:am} shows that
$X_1(n)$ and $X_2(n)$ are homeomorphic but not diffeomorphic
$4$-manifolds.
\begin{rem} \label{rem:kesobb}
It is somewhat tricky (but not difficult) to see that the knot $S$ we
chose for $K_1(n) $ is actually a slice knot for every $n\in \bfn $.
Notice that, strictly speaking, we do not need this fact in our
argument when showing that the $4$-manifolds $X_1(n), X_2(n)$ are
nondiffeomorphic. The fact that the generator of $H_2(X_1(n); \bfz )$ can
be represented by a sphere easily follows from the description of
$X_1(n)$ we get after blowing up the clasp in the dashed circle in
Figure~\ref{masiklink} --- the exceptional sphere of the blow-up will
represent the generator of the second homology, since the two other
handles form a cancelling pair in homology.
\end{rem}   

In fact, the same line of reasoning applies for all knot pairs we get by
putting an appropriate module into the box of Figure~\ref{elsolink_lefujas}.
In general, we have
\begin{thm}
If the module contains an oriented Legendrian diagram of the unknot
after removing a left-most and a right-most cusps (with appropriately  orientated arcs)
with ${\mathrm {tb}} = 0$, then 
\begin{enumerate}[\rm (1)]
\item
$K_0$ is the unknot,
\item
$K_1$ is slice,
\item
$K_2$ has a Legendrian realization with ${\mathrm {tb}} = 0$, and
\item
the $4$-manifolds $X_1$ and $X_2$ are homeomorphic but not diffeomorphic. \qed
\end{enumerate}
\end{thm}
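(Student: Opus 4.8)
The plan is to verify, for the knot pair $(K_1, K_2)$ produced from an arbitrary module satisfying the hypothesis, the three bullet-pointed properties stated just before Theorem~\ref{thm:am}, and then to invoke that theorem to obtain conclusion~(4); conclusions~(1)--(3) are established as intermediate steps, exactly as they were for the explicit twist modules of Figure~\ref{modulpelda2}.

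I would begin with conclusion~(3). Blowing down the $(+1)$-framed unknot $C_2$ of Figure~\ref{elsolink_lefujas} yields $K_2$ with framing $-1$, and the isotopy of Figure~\ref{csomo} presents $K_2$ as a front projection whose only undetermined part is the content of the box. Recall that for a front projection the Thurston--Bennequin number equals the writhe minus the number of right cusps. The portion of the diagram lying outside the box contributes a fixed amount to each of these two counts, independent of the module, so ${\mathrm{tb}}(K_2)$ is this fixed contribution plus the corresponding writhe-minus-right-cusps count of the module alone. The hypothesis says precisely that the module, after deleting one left-most and one right-most cusp, is a Legendrian unknot with ${\mathrm{tb}}=0$; bookkeeping the two deleted cusps then shows that the module contributes exactly the value needed to force ${\mathrm{tb}}(K_2)=0$. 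This is conclusion~(3).

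Next I would establish~(1) and use it to get~(2). The same hypothesis guarantees that once the module is inserted the $0$-framed component $C_1$ closes up to an unknot, as in the special case recorded before the theorem; this is conclusion~(1), with $K_0=C_1$. Since $C_1$ is then an unknot with framing $0$, surgering it out replaces it by a $1$-handle, and after the isotopy of Figure~\ref{linkekizotopok} carries the link into the configuration of Figure~\ref{masiklink} I would blow up the clasp inside the dashed circle. Pulling the resulting $(-1)$-framed unknot through the $1$-handle makes it cancel that handle (the two have geometric linking one), so the remaining $2$-handle is attached along a knot $K_1$ that bounds the disk inherited from the cancelling pair; hence $K_1$ is slice, which is conclusion~(2). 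As noted in Remark~\ref{rem:kesobb}, one may alternatively bypass sliceness entirely: the exceptional sphere of this blow-up already represents the generator of $H_2(X_1)$, because the $1$-handle and the handle along $C_2$ cancel in homology.

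To finish, observe that the construction used only blow-ups, blow-downs, isotopies, and surgery on a $0$-framed component, so $\del X_1$ and $\del X_2$ are both diffeomorphic to the boundary of the $4$-manifold presented in Figure~\ref{elsolink_lefujas}; in particular they agree, giving the first bullet point before Theorem~\ref{thm:am}. Together with~(2) and~(3), all three hypotheses of that theorem now hold, and its proof yields conclusion~(4). The one genuinely delicate point is the cancellation argument of the previous paragraph: one must check that, for \emph{every} admissible module and not merely the explicit twist modules, the blown-up unknot really acquires geometric linking one with the $1$-handle after being pulled through it. This is the ``somewhat tricky'' step flagged in Remark~\ref{rem:kesobb}, and carrying it out in the stated generality is the main obstacle; by contrast, the boundary identification and the ${\mathrm{tb}}$ computation are uniform in the module and essentially formal.
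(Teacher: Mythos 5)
Your proof is correct and follows essentially the same route as the paper, which states this theorem with a \qed{} precisely because it is meant to follow from the preceding construction: the writhe/cusp count giving ${\mathrm{tb}}(K_2)=0$, the surgery/blow-up/handle-cancellation argument giving $K_0$ unknotted and $K_1$ slice, the observation that both boundaries agree with that of the manifold of Figure~\ref{elsolink_lefujas}, and finally an appeal to Theorem~\ref{thm:am}. The delicate step you flag is the same one the paper defers to Remark~\ref{rem:kesobb} (there phrased as the sliceness of $S$ rather than the geometric-linking claim, which is local to the clasp and uniform in the module), and you correctly note the paper's own workaround: sliceness can be bypassed because the proof of Theorem~\ref{thm:am} only needs the generator of $H_2(X_1;\Z)$ to be represented by an embedded sphere, which the exceptional sphere of the blow-up supplies.
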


It is unlcear, however, whether in general the resulting exotic pairs will provide
new examples.  In order to prove that the pairs $(X_1(n), X_2(n))$ 
for the particular modules of Figure~\ref{modulpelda2}   do provide an infinite
sequence of extensions of the example of \cite{AM}, it is enough to
show that the boundary integral homology spheres are different for
 $n \geq 0$.

\begin{prop}\label{prop:veg}
The integral homology spheres obtained by 
$-1$ surgery along $K_2(n)$ are pairwise not homeomorphic
for  $n \geq 0$. 
\end{prop}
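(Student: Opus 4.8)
The plan is to distinguish the homology spheres $\del X_2(n)$ by computing a diffeomorphism (or homeomorphism) invariant that is sensitive to the number $n$ of full left twists in the inserted module. The most natural candidate is the Casson invariant $\lambda$, which is an integer-valued invariant of integral homology spheres and which admits a well-understood surgery formula. First I would express $\del X_2(n)$, which by construction is $(-1)$-surgery along the knot $K_2(n)$ in $S^3$, and track how $K_2(n)$ depends on $n$. The key observation is that inserting the module of Figure~\ref{modulpelda2} with $n$ full left twists is exactly a twisting operation along an unknotted circle linking the two strands passing through the box; performing $\frac{1}{m}$-twists along such a circle is the same as a sequence of blow-downs, so the family $K_2(n)$ is obtained from $K_2(0)$ by full twists on a fixed $2$-strand band.

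The main computational step is then the surgery formula for the Casson invariant. Casson's formula states that
\[
\lambda(S^3_{1/(m+1)}(J)) - \lambda(S^3_{1/m}(J)) = \tfrac12 \Delta_J''(1),
\]
where $\Delta_J$ is the (symmetrized) Alexander polynomial of the knot $J$, and an analogous additivity holds under full twists along a circle, governed by a second-order term of the relevant Alexander or Conway polynomial. Concretely, I would set up the twist region as surgery on an auxiliary unknotted circle $c$ bounding a disk met twice by $K_2(0)$, so that $K_2(n)$ is the image of $K_2(0)$ after $\frac{1}{n}$-twisting along $c$. Computing $\lambda(\del X_2(n))$ via the twisting (equivalently, via iterated blow-downs of $(+1)$- or $(-1)$-framed copies of $c$) yields a quadratic polynomial in $n$,
\[
\lambda(\del X_2(n)) = an^2 + bn + \lambda(\del X_2(0)),
\]
with leading coefficient $a$ determined by a linking/framing number in the diagram. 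The crucial point is to verify that $a \neq 0$: once the quadratic is nonconstant, $\lambda$ takes each value at most twice, so $\lambda(\del X_2(n))$ are pairwise distinct for all but finitely many $n$, and a direct check of the finitely many small cases finishes the argument. Since $\lambda$ is a homeomorphism invariant of integral homology spheres, distinct $\lambda$-values force non-homeomorphic boundaries.

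The hard part will be pinning down the leading coefficient $a$ and confirming it is nonzero, since this requires actually reading off the relevant linking data from the framed-link picture in Figure~\ref{elsolink_lefujas} (and tracking how the blow-down of $C_2$ and the isotopies interact with the twist region). This is precisely the ``tedious computation'' that the Introduction promises to relegate to the Appendix in Section~\ref{sec:appendix}: it is a concrete but lengthy manipulation of the diagram to extract the Conway/Alexander data and to apply the twist-surgery formula correctly. An alternative, should the Casson computation prove delicate, would be to use a finer invariant such as the Frøyshov invariant, the $d$-invariant from Heegaard Floer homology, or the Reidemeister--Turaev torsion, any of which likewise grows nontrivially under the twisting and would separate the members of the family; but the Casson invariant is the cleanest route because of its polynomial behavior under full twists. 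In all cases the strategy is the same: produce a numerical invariant whose value on $\del X_2(n)$ is a nonconstant polynomial in $n$, so that only finitely many coincidences are possible and those are ruled out by inspection.
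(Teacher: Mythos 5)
Your strategy has a fatal flaw: for this particular family the Casson invariant is \emph{constant} in $n$, so the ``crucial point'' of your argument --- verifying that the polynomial $an^2+bn+\lambda(\del X_2(0))$ is nonconstant --- fails. Concretely, $\lambda(S^3_{-1}(K))=-\tfrac12 \Delta''_K(1)=-c_2(K)$, where $c_2$ is the coefficient of $z^2$ in the Conway polynomial, and the skein relation applied in the twist region gives $\nabla(K_2(n))-\nabla(K_2(n-1))=-z\cdot\nabla(J_0)$, where $J_0$ is the two-component link obtained by the oriented resolution of a crossing in the module. Because the two strands passing through the box are oppositely oriented, the twisting circle has linking number zero with the knot, so $\nabla(J_0)=z^3$ and the twist changes the Conway polynomial only in degree $4$: indeed $\nabla(K_2(n))=1+2z^2-nz^4$ (Lemma~\ref{lem:conwayszam}). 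Hence $c_2(K_2(n))=2$ and $\lambda(S^3_{-1}(K_2(n)))=-2$ for every $n$; the paper points this out explicitly in the Appendix (via \cite{Mu94}) after computing the second derivative of the Jones polynomial. The same vanishing of the linking number that makes the construction preserve the $(-1)$-framing is exactly what kills the Casson invariant's dependence on $n$, so no amount of care with the surgery/twist formula will rescue this route; your fallback suggestions ($d$-invariants, Fr{\o}yshov, torsion) are merely asserted to ``grow nontrivially'' without any computation, and cannot be accepted as a proof.

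What the paper does instead is pass to the \emph{second} Ohtsuki invariant $\lambda_2$, a finite-type invariant of integral homology spheres one degree deeper than Casson. By the surgery formula of Lin--Wang \cite[Theorem~5.2]{LW99}, for $Y=S^3_{-1}(K)$ one has
\[
\lambda_2(Y)=\frac{v_2(K)}{2}+\frac{v_3(K)}{3}+\frac{5}{3}v_2^2(K)-60\,c_4(K),
\]
where $v_i(K)$ are derivatives of the Jones polynomial at $t=e^h$, $h=0$, and $c_4(K)$ is the $z^4$-coefficient of the Conway polynomial. Precisely the quantities your approach discards ($c_4=-n$ and $v_3=36n+108$, computed in Lemmas~\ref{lem:conwayszam} and \ref{lem:jonesszamok}) carry the $n$-dependence: one gets $\lambda_2(S^3_{-1}(K_2(n)))=72n+270$, which is injective in $n$ and finishes the proof. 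If you want to keep the spirit of your argument, replace the Casson invariant by $\lambda_2$ and carry out the Jones/Conway skein computations in the twist region; that is exactly the ``tedious computation'' deferred to Section~\ref{sec:appendix}.
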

\begin{proof}
The \emph{Ohtsuki invariants} $\la_k (Y)$ 
of an integral homology sphere $Y$
(extracted from the quantum invariant of 
Reshetikin-Turaev of the $3$-manifold) can be used to
distinguish integral homology spheres.
These invariants were introduced by Ohtsuki
 \cite{Oh95}, and a more computable derivation of the invariants was 
given in \cite{LW99}.  $\la_1(Y)$ is
determined by the Casson invariant of the $3$-manifold $Y$, and in case
$Y$ is given as integral surgery along a knot $K$, the invariant $\la_2 (Y) $ can be
computed from the Jones polynomial and the Conway polynomial of
$K$. More precisely, if $Y$ is given as $(-1)$-surgery along a knot
$K\subset S^3$ then by \cite[Theorem~5.2]{LW99}
\[
\la_2 (Y)= \frac{v_2(K)}{2} + \frac{v_3(K)}{3} + \frac{5}{3}v_2^2(K) - 60c_4(K),
\]
where $c_4(K)$ is the coefficient of $z^4$ in the Conway polynomial of
$K$ and $v_i (K)= \frac{ \del^i V(K, e^{h})}{\del h^i}(0)$, where
$V(K,t)$ is the Jones polynomial of $K$.  (The Conway and Jones
polynomials are defined by skein theory and normalization as given in
\cite{LW99}.) A somewhat tedious computation (postponed to an
Appendix, cf.\ Lemmas~\ref{lem:conwayszam} and \ref{lem:jonesszamok})
shows that for the knot $K_2(n)$ the value of $c_4$ is $-n$, the value
of $v_2$ is $-12$, while $v_3$ is $36n +108$.  This shows that $\la_2$
of the $3$-manifold $S^3_{-1}(K_2(n))$ we get by $(-1)$-surgery along
$K_2(n)$ is equal to $72n +270$.  Therefore the Ohtsuki invariants
$\lambda _2$ of the manifolds $S^3_{-1}(K_2(n))$ are all different,
implying that the 3-manifolds are pairwise nondiffeomorphic.
\end{proof}

\begin{proof}[Proof of Theorem~\ref{thm:vegtelen}]
The examples $(K_1(n), K_2(n))$ found above verify the theorem: the
same proof as the one given in Theorem~\ref{thm:am} applies and shows
that the $4$-manifolds corresponding to a pair $(K_1(n), K_2(n))$ are
homeomorphic but non-diffeomorphic, and by Proposition~\ref{prop:veg}
for $n\geq 0$ these examples are pairwise distinct.  By the proof of
Theorem~\ref{generatoros}, we see that for each $n \geq 0$ the smooth
$4$-manifolds corresponding to $K_1(n)$ and $K_2(n)$ are distinguished
by the $\sg$ invariants given in Definition~\ref{sginvdef}.  (The
proof of Theorem~\ref{generatoros} is given in the next Section.)
\end{proof}

\section{Maps on $4$-manifolds}
\label{sec:three}

One of the main ingredients of our arguments below is derived from a
construction of Saeki. This construction (under some specific
restrictions on the topology of the $4$-manifold) provides stable maps
on $4$-manifolds with strong control on their singular sets.  (In
fact, in our applications we will only use the existence part of the
equivalence.)

\begin{thm}(\cite[Theorem 3.1]{Sa03})
\label{thm:saeki}
Suppose that $X$ is a closed, oriented, connected, smooth $4$-manifold
with embedded nonempty (not necessarily connected) surfaces $F=F_0\cup
F_1$. There exists a fold map $f\colon X \to \R ^3$ with $F_0$ the definite and 
$F_1$ the indefinite fold singular locus if and only if
\begin{enumerate}[\rm (1)]
\item for the Euler characteristics $\chi (X)=\chi (F_0)-\chi (F_1)$ holds,
\item the Poincar\'e dual of the class $[F]$ represented by the surface $F$
(in mod $2$ homology) coincides with $w_2(X)$,
\item $F_0$ is orientable,
\item the self-intersection of every component  of $F_1$ is zero, and
\item the self-intersection of $F_0$ is equal to $3\sigma (X)$, where
  $\sigma (X) $ is the signature of $X$. \qed
\end{enumerate} 
\end{thm}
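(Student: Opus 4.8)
The plan is to prove the equivalence by treating the two directions separately, with the \emph{necessity} of (1)--(5) following from characteristic-class and Morse-theoretic computations on a given fold map, and the \emph{sufficiency} following from the $h$-principle for submersions once the map has been prescribed near the fold locus. The single most useful tool will be the observation that over the regular set $X\setminus F$ the differential $df$ is a fiberwise epimorphism, so its kernel is a line bundle $L$ and $TX|_{X\setminus F}\cong L\oplus f^{*}T\R^{3}\cong L\oplus\epsilon^{3}$; thus all of the tangential characteristic classes of $X$ are forced to be localized along $F$, and reading off these localized contributions produces conditions (2) and (5).

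For necessity I would argue as follows. Condition (1) is a Morse-theoretic Euler-characteristic count: composing $f$ with a generic linear functional $\R^{3}\to\R$ gives a Morse function on $X$ whose critical points lie on $F$, and whose indices differ by an even amount along $F_{0}$ and by an odd (saddle) amount along $F_{1}$; summing $(-1)^{\mathrm{ind}}$ and using that these restricted critical points compute the Euler characteristics of $F_{0}$ and $F_{1}$ yields $\chi(X)=\chi(F_{0})-\chi(F_{1})$. For (2), the splitting above shows $w_{2}(TX)$ vanishes off $F$ (a line bundle has no $w_{2}$), so $w_{2}(X)$ is Poincar\'e dual to a class carried by $F$; computing the local contribution at a fold (the kernel line field fails to extend across the fold, with mod~$2$ obstruction~$1$) identifies this class with $[F]$. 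Conditions (3) and (4) are local normal-form statements: the definite model $(x^{2}+y^{2},z,v)$ coorients the normal $2$-plane bundle of $F_{0}$ (forcing orientability), while the indefinite model $(x^{2}-y^{2},z,v)$ exhibits a nowhere-zero section of the normal bundle of each component of $F_{1}$ (forcing vanishing self-intersection). Finally (5) is the integral refinement of (2): since $p_{1}(L\oplus\epsilon^{3})=p_{1}(L)=0$, the Pontryagin number $p_{1}[X]$ is the sum of the local contributions along $F$, which vanish on $F_{1}$ by (4) and equal the normal Euler number $[F_{0}]^{2}$ on $F_{0}$; the Hirzebruch signature theorem $p_{1}[X]=3\sigma(X)$ then gives $[F_{0}]^{2}=3\sigma(X)$.

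For sufficiency I would build the map in two stages. First, using (3), (4) and the prescribed self-intersection data, I would install the standard definite and indefinite fold models on tubular neighborhoods $N(F_{0})$ and $N(F_{1})$; the orientability of $F_{0}$ and the value of $[F_{0}]^{2}$ are exactly what is needed to fit the definite model onto the normal bundle of $F_{0}$, and the vanishing self-intersection of $F_{1}$ is what lets the indefinite model be globalized over each component. This produces an honest fold map on $N(F)$, hence a submersion $\partial N(F)\to\R^{3}$ together with its formal (bundle-epimorphism) data. Second, I would extend this to a submersion on $W=X\setminus\mathrm{int}\,N(F)$ by Phillips' submersion theorem (a relative $h$-principle on the manifold-with-boundary $W$): a genuine submersion rel $\partial W$ exists if and only if there is a fiberwise-epimorphic bundle map $TW\to\epsilon^{3}$ agreeing on $\partial W$ with the one already constructed, i.e.\ a line subfield $K\subset TW$ together with a trivialization $TW/K\cong\epsilon^{3}$ of the prescribed kind.

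The crux --- and the step I expect to be the main obstacle --- is showing that this formal extension problem is solvable \emph{precisely} under (1)--(5), with no further hidden obstruction. The primary obstructions to choosing $K$ and the trivialization of $TW/K$ live in cohomology with coefficients in the homotopy groups of the relevant Stiefel manifold, and they are governed by exactly the characteristic data appearing in the theorem: the relative Euler-class/line-field obstruction is measured by the Euler-characteristic balance (1), the mod~$2$ obstruction by the $w_{2}$-condition (2), and the integral obstruction by the Pontryagin number, i.e.\ the signature condition (5). The signature condition is the delicate one, since it is an integral characteristic number rather than a mod~$2$ invariant and must be matched through the Hirzebruch formula; verifying that these are the only obstructions --- so that once they vanish Phillips' theorem upgrades the formal solution to an actual submersion and the local models glue to a global fold map --- is where the restriction to the dimension pair $(4,3)$ and to the two fold types is essential.
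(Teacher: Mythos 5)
A preliminary remark: the paper you were given does not prove this statement at all --- it is quoted verbatim from Saeki \cite[Theorem~3.1]{Sa03} and used as a black box --- so your attempt has to be measured against Saeki's proof, fragments of which the paper does invoke in Section~\ref{sec:defect}. Your necessity direction is, in outline, the standard argument (Euler-characteristic count via a generic height function, localization of $w_2$ and $p_1$ along the fold locus), but two of your mechanisms are wrong and one step is an unproved lemma. Orientability of $F_0$ is not forced by the definite model ``coorienting the normal $2$-plane bundle'': the symmetry group of $x^2+y^2$ is $O(2)$, which orients nothing. The correct argument is that $f|_{F_0}$ is an immersion of $F_0$ into $\R^3$ whose normal line in $\R^3$ is trivialized by the side into which the fold maps, so $TF_0\oplus\ep^1\cong\ep^3$ and $w_1(F_0)=0$. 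Likewise, the indefinite model gives no nowhere-zero section of the normal bundle of $F_1$; it gives the two Hessian eigenlines, i.e.\ a reduction of the structure group to a finite $2$-primary group (this is exactly how the paper phrases it in Section~\ref{sec:defect}), and it is flatness that kills the twisted Euler number --- a line \emph{subbundle} alone does not. Finally, your claim that the local $p_1$-contribution of a definite component equals its normal Euler number is precisely Saeki's Lemma~3.4 (the Hirzebruch defect of the induced boundary framing is $-\Si_i\cdot\Si_i+3\sgn(\Si_i\cdot\Si_i)$; note the signature correction term coming from $\si(N(\Si_i))\neq 0$); it requires a genuine computation, not just the observation $p_1(L)=0$.

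The fatal gap is in sufficiency. Phillips' theorem cannot be applied ``relative to the boundary'' on $W=X-\int N(F)$: the h-principle for submersions requires a handle decomposition of the region still to be solved, rel the set where the solution is already given, with handles of index $<4$, whereas \emph{every} handle decomposition of the compact manifold $W$ rel $\del W$ contains a $4$-handle (its dual is an absolute handle decomposition of $W$, which needs a $0$-handle). And the failure is not repairable by computing more obstructions of the kind you list, because formal solvability does not imply genuine solvability in this relative compact setting. Concretely: if every submersion germ near $S^3=\del D^4$ with formally extendable differential extended to a genuine submersion of $D^4$, then $T^4$ would submerse into $\R^3$ --- apply Phillips absolutely (legitimate, since $T^4-\int D^4$ has nonempty boundary) to get a genuine submersion in the formal class of the standard trivialization, observe that its germ at $S^3$ is then formally extendable over $D^4$, extend it genuinely by hypothesis, and glue; but a closed manifold admits no submersion into Euclidean space, as the image would be open, compact and nonempty, hence all of $\R^3$. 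So the step ``Phillips upgrades the formal solution to an actual submersion'' is exactly where your proof breaks. You did flag the crux, but located it in the formal obstruction computation, which is the part that \emph{does} work: by the Kirby--Melvin criterion a stable framing on $\del W$ extends over $W$ precisely when its degree equals $\chi(W)$ and its relative $p_1$ vanishes, and that is what conditions (1) and (5) control. What Saeki's sufficiency argument supplies, and what your plan is missing, is an explicit construction of the map on $W$ --- organizing the complement as circle-fibered pieces (in the spirit of the Stein factorization of a fold map) so that the unavoidable top-index handles are filled by concrete models, with the framing calculus only certifying that the pieces can be matched along their boundaries.
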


Let $K$ be a knot in $S^3$ and denote by $X=X_K$ the $4$-manifold
obtained by gluing a $2$-handle to $D^4$ determined by $K$ and framing
$-1$.  The Euler characteristic $\chi (X)$ of $X$ is equal to $2$.  An
embedded orientable surface $S$ coming from the core of the $2$-handle
and a surface in $D^4$ bounding $K$ represents the generator of
$H_2(X; \Z ) = \Z$. It follows that $S \cdot S = -1$.  If $K$ is
slice, then $S$ can be chosen to be a sphere.
 
A fold map is called a {\it
  definite fold map} if it has only definite fold singularities\footnote{Another terminology
  for a definite fold map is {\it special generic map} or {\it submersion with definite folds}.}.

\begin{prop}\label{Xenfold} 
  Let $X$ be a $4$-manifold given by attaching a single $2$-handle to
  $D^4$.  There is a definite fold map $f \co X \to \R^3$ such that
  $S$ is equal to the singular set of $f$.
\end{prop}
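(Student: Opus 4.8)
The plan is to obtain $f$ from Saeki's existence result, Theorem~\ref{thm:saeki}, rather than to write it down by hand. Since $X$ has nonempty boundary, producing a definite fold map on $X$ means, by the definition used above, producing a fold map $f\co \tilde X\to\R^3$ on a closed manifold $\tilde X\supset X$ that is nonsingular near $\partial X$ and whose fold locus meets $X$ in exactly $S$, all of it definite. So first I would close $X$ up to a closed manifold $\tilde X$ and then feed suitable embedded surfaces into Theorem~\ref{thm:saeki}, arranging that every auxiliary fold surface is pushed into the added piece.

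The natural closure is the double $\tilde X=DX=X\cup_{\partial X}\overline X$, where $\overline X$ denotes $X$ with reversed orientation; it is closed, oriented, connected, smooth, and by Novikov additivity $\sigma(DX)=0$, while $\chi(DX)=2\chi(X)-\chi(\partial X)=4$. I would take $F_0=S\sqcup\overline S$, with $\overline S\subset\overline X$ the mirror copy of $S$: it is orientable and disjoint from $S$, and since reversing the ambient orientation negates self-intersection, $[\overline S]^2=+1$. Hence $[F_0]^2=[S]^2+[\overline S]^2=-1+1=0=3\sigma(DX)$, so condition~(5) holds automatically — this is precisely the point of doubling. Condition~(3) holds as $F_0$ is orientable; and by Mayer--Vietoris $H_2(DX)\cong\Z^2$ carries the form $\langle-1\rangle\oplus\langle+1\rangle$ on the basis $[S],[\overline S]$, so a characteristic-element computation shows that $w_2(DX)$ is Poincar\'e dual to $[S]+[\overline S]$, which gives condition~(2).

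In the slice case $S=S^2$ one has $\chi(F_0)=4=\chi(DX)$, so condition~(1) holds with $F_1=\emptyset$, and all five conditions are met outright. For a genus-$g$ surface $S$ the Euler characteristics no longer match, and I would introduce indefinite folds $F_1$ inside the cap to repair~(1): connect-sum one copy of $S^2\times S^2$ into $\overline X$ and let $F_1$ be a square-zero surface lying in an even homology class of the summand, of the genus forced by~(1). Because $[F_1]\equiv 0\pmod 2$ and each component has square zero, conditions~(2) and~(4) survive the modification. The one genuinely fiddly point — the step I expect to be the main obstacle — is exactly this bookkeeping: simultaneously matching $\chi$, the mod-$2$ class, and the zero-square requirement by a surface that can be \emph{placed in the interior of the added piece}, disjoint from $S$ and from a collar of $\partial X$.

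With $F_0,F_1$ so chosen, Theorem~\ref{thm:saeki} yields a fold map $f\co\tilde X\to\R^3$ with definite fold locus $F_0$ and indefinite fold locus $F_1$. Restricting to $X$, both $\overline S$ and $F_1$ lie in the cap, so the singular set of $f|_X$ is $F_0\cap X=S$, it consists only of definite folds, and $f$ is nonsingular in a neighborhood of $\partial X$; thus $f|_X$ is the desired definite fold map with singular set $S$. (Alternatively one could attempt the explicit model, folding the normal $D^2$-bundle $\nu(S)$ onto a one-sided collar of an embedding $S=\partial W\hookrightarrow\R^3$ by $e\mapsto(|e|^2,\pi(e))$ and filling $X\setminus\nu(S)$ by an arc-fibered submersion; but reconciling the circle fibers on $\partial\nu(S)$ with the complement is exactly where the difficulty would sit there, so the route through Theorem~\ref{thm:saeki} is cleaner.)
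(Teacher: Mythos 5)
Your proposal is correct and takes essentially the same route as the paper: double $X$, apply Theorem~\ref{thm:saeki} with $F_0=S\cup\bar S$ and the auxiliary indefinite folds hidden in the cap, then restrict to $X$. The only real difference is that your $S^2\times S^2$ connected sum is an unnecessary detour: the paper simply takes $F_1$ to be a null-homologous surface of genus $1+2g(S)$ standardly embedded in a small chart of the second copy of $X$ (the zero class is even and square-zero, so conditions (1), (2) and (4) hold without modifying the double, which the paper identifies as $\CP^2\#\bCPk$), and this choice also works uniformly in the slice case, where you instead invoke the theorem with $F_1=\emptyset$.
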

\begin{proof}
  Double $X$ along its boundary. It is easy to see that the resulting
  closed $4$-manifold is diffeomorphic to $\CP^2 \# \bCPk$,
  cf.\ \cite{GS99}.  Apply \cite[Theorem 3.1]{Sa03} of Saeki quoted in
  Theorem~\ref{thm:saeki} as follows. Let $F_0$ be $S \cup \bar S$ in
  $\CP^2 \# \bCPk$ and let $F_1$ be a standardly embedded surface of
  genus $= 1 + 2g(S)$ in a small local chart in the second copy of $X$
  such that $(S \cup \bar S) \cap F_1 = \emptyset$. Then conditions
  (1)-(5) of \cite[Theorem 3.1]{Sa03} are satisfied: $\chi(\CP^2 \#
  \bCPk) = \chi(S \cup \bar S) - \chi(F_1)=4$, the Poincar\'e dual of
  the homology class $[S \cup \bar S \cup F_1]$ is characteristic,
  hence reduces to $w_2(\CP^2 \# \bCPk)$ mod 2, $S \cup \bar S$ is
  orientable, $F_1 \cdot F_1= 0$ and $S \cdot S + \bar S \cdot \bar S
  = 0$.  Hence there is a fold map $f' \co \CP^2 \# \bCPk \to \R^3$
  such that $S$ is a component of the definite fold singular set.
  Thus restricting $f'$ to $X$ gives a definite fold map $f \co X \to
  \R^3$ such that $S$ is equal to the singular set of $f$.
\end{proof}
This construction provides the essential tool to prove
Theorem~\ref{generatoros}.
\begin{proof}[Proof of Theorem~\ref{generatoros}]
  Let $X_j=X_{K_j(n)}$ for some $n\in \bfn $ and $j\in \{ 1, 2\}$,
  where $K_j(n)$ are the knots found in the proof of
  Theorem~\ref{thm:vegtelen}.  By Proposition~\ref{Xenfold} both $X_1$
  and $X_2$ have definite fold maps into $\R^3$ such that the singular
  set components represent a generator for the second homology group
  $H_2$, thus $\sg^{1}(X_i, A) \neq \infty$, $i=1,2$.  We have
  that $\sg^{1}(X_1, A) = 0$ since $X_1$ is the manifold obtained
  by handle attachment along a slice knot.  On the other hand, as the
  proof of Theorem~\ref{thm:am} showed, an embedded sphere cannot
  represent a generator of $H_2(X_2; \Z )$ hence $\sg^{1}(X_2, A)
  > 0$. 
  Finally, clearly each of two disjoint surfaces cannot represent a generator, hence
  we get the result for $\sg^{k}$, where $k \geq 2$.
  \end{proof}

\subsection*{Maps on closed $4$-manifolds}
With a little bit of additional work, we can prove a similar statement
for closed $4$-manifolds, eventually leading us to the proof of
Theorem~\ref{minuszegyes}.

\begin{proof}[Proof of Theorem~\ref{minuszegyes}]
  Let $K_1=K_1(n)$ and $K_2=K_2(n)$ be one of the pairs of examples
  provided by Theorem~\ref{thm:vegtelen}. The property of $K_2$ being
  smoothly isotopic to a Legendrian knot with vanishing
  Thurston-Bennequin invariant implies that the
  $4$-manifold-with-boundary $X_2$ we get by attaching a
  $4$-dimensional $2$-handle along $K_2$ with framing $-1$ admits a
  Stein structure. By a result of Lisca-Mati\'c \cite{LM} the
  $4$-manifold $X_2$ therefore embeds into a minimal complex surface
  $Z$ of general type (which we can always assume to have
  $b_2^+>1$). Since $X_2$ has odd intersection form, it follows that
  the intersection form $Q_Z$ of $Z$ is also odd. Indeed, we can also
  assume that the intersection form of $Z-{\rm {int\ }} X_2$ is also
  odd.  Therefore $Q_Z$ can be written in an appropriate basis $B_Z =
  \{ e, a, b, f_1, \ldots , f_n, g_1, \ldots , g_m \}$ of $H_2(Z;
  {\mathbb {Z}})/Torsion$ as
\[
\langle -1 \rangle \oplus H\oplus n \langle -1 \rangle  
\oplus m \langle 1 \rangle  
\]
for some $n, m$, where the first summand (generated by $e$)
corresponds to the generator of $H_2 (X_2; \Z )$ and $H$ denotes a
hyperbolic pair with basis elements $a,b$.  Blow up $Z$ $j$-times
(with $j=0,1,2$ or $3$) in order to achieve that the resulting complex
surface $W$ has signature $\sigma (W)$ divisible by $4$: $\sigma
(W)=4k$. The basis elements $h_1, \ldots , h_j \in B_W = B_Z \cup \{
h_1, \ldots , h_j \}$ correspond to the exceptional divisors of the
(possible) blow-ups. Consider now the homology class $$\Sigma = e+
2\cdot a+2k\cdot b +\sum f_i +\sum g_j +\sum h_l.$$ Since by
definition $-1-n+m-j=\sigma (W)=4k$, it is easy to see that

\begin{lem}\label{szigmakar}
  The homology class $\Sigma $ is a characteristic element in the
  sense that the Poincar\'e dual $PD(\Sigma)$ reduced mod $2$ is equal
  to the second Stiefel-Whitney class $w_2(W)$, and the
  self-intersection of $\Sigma$ is equal to $-1+8k-n+m-j=3\sigma (W)$.
  \hfill $\Box$
\end{lem}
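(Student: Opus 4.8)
The plan is to verify both assertions by a direct computation in the basis $B_W$, exploiting the fact that the intersection form of $W$ is diagonal on $B_W$ except on the hyperbolic pair $\{a,b\}$, where $a\cdot a=b\cdot b=0$ and $a\cdot b=1$, while $e\cdot e=-1$, $f_i\cdot f_i=-1$, $g_i\cdot g_i=+1$ and $h_l\cdot h_l=-1$.

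First I would treat the characteristic property. Recall that on a closed oriented smooth $4$-manifold a class $c$ satisfies $PD(c)\equiv w_2(W)\pmod 2$ precisely when $c\cdot x\equiv x\cdot x\pmod 2$ for every $x\in H_2(W;\Z)$; this is the standard characterization of $w_2$ via Wu's formula. Both sides of this congruence are linear in $x$ modulo $2$ — the right-hand side because $(x+y)\cdot(x+y)\equiv x\cdot x+y\cdot y\pmod 2$ — so it suffices to test it on the generators in $B_W$. Running through them, pairing $\Sigma$ with each of $e,f_i,g_i,h_l$ returns the corresponding diagonal entry (since each of these generators occurs in $\Sigma$ with coefficient $1$), which is $\pm 1$ and hence agrees mod $2$ with its own self-intersection; pairing $\Sigma$ with $a$ gives $2k\,(a\cdot b)=2k$ and with $b$ gives $2\,(a\cdot b)=2$, both even, matching $a\cdot a=b\cdot b=0$. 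Thus $\Sigma\cdot x\equiv x\cdot x\pmod 2$ on all of $B_W$, so $\Sigma$ is characteristic. The mechanism is simply that the coefficients of $\Sigma$ are odd on exactly the generators of odd self-intersection and even on the hyperbolic pair.

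For the self-intersection I would expand $\Sigma\cdot\Sigma$ using orthogonality of $B_W$ outside $\{a,b\}$: the $e$-term contributes $-1$, the $f_i$-terms contribute $-n$, the $g_i$-terms contribute $+m$, the $h_l$-terms contribute $-j$, and the hyperbolic block gives $(2a+2kb)\cdot(2a+2kb)=8k\,(a\cdot b)=8k$. Summing yields $\Sigma\cdot\Sigma=-1+8k-n+m-j$. Now I would invoke the stated relation $-1-n+m-j=\sigma(W)=4k$, so that $\Sigma\cdot\Sigma=8k+(-1-n+m-j)=8k+4k=12k=3\sigma(W)$, as claimed.

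There is no genuine obstacle at this stage: the statement is a bookkeeping verification, and the only things to keep straight are the signs of the diagonal entries and the fact that the two coefficients attached to the hyperbolic summand must be even. The real content sits upstream — in the choice of the coefficients $2$ and $2k$ and in arranging the $j$ blow-ups so that $\sigma(W)=4k$ — which is exactly what forces the final arithmetic to collapse to $3\sigma(W)$, matching condition (5) of Theorem~\ref{thm:saeki}.
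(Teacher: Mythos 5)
Your verification is correct and is exactly the computation the paper has in mind: the lemma is stated there with a closing $\Box$ and no written argument, the point being that both claims follow by direct bookkeeping in the basis $B_W$ together with the relation $-1-n+m-j=\sigma(W)=4k$. Your two steps --- the mod $2$ test of $\Sigma\cdot x\equiv x\cdot x$ on generators (odd coefficients precisely on the generators of odd square, even coefficients $2$ and $2k$ on the hyperbolic pair) and the expansion $\Sigma\cdot\Sigma=-1+8k-n+m-j=8k+4k=12k=3\sigma(W)$ --- are precisely what the paper's ``it is easy to see'' refers to.
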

As any second homology class of a smooth $4$-manifold, the class
$\Sigma$ can be represented by a (not necessarily connected) oriented
surface $F_0\subset W$. Indeed, we can assume that the part $\sum h_l$
of $\Sigma $ is represented by $j$ disjoint embedded spheres of
self-intersection $-1$. Notice also that $W$ does not contain $j+1$
disjoint $(-1)$-spheres: since $Z$ has two Seiberg-Witten basic
classes $\pm c_1(Z)$ with $c_1^2(Z)>0$, by the blow-up formula (and
since it is of simple type) $W$ has $2^{j+1}$ basic classes. If $W$
had $(j+1)$ disjoint $(-1)$-spheres, then it could be written as
$W=Y\#_{j+1}{\overline {\mathbb {CP}}}^2$, hence by the blow-up
formula again $Y$ has a unique basic class, which is therefore equal
to $0$, implying that $c_1^2(Z)=-1$, a contradiction.  Since $b_2^+(W)
> 1$, there is no homologically non-trivial embedded sphere in the
complex surface $W$ with non-negative self-intersection.

Now let $X_1$ denote the $4$-manifold-with-boundary we get by
attaching a $4$-dimensional $2$-handle to $D^4$ along $K_1$. Since
$\partial X_1$ is diffeomorphic to $\partial X_2$, we can consider the
smooth $4$-manifold $V=X_1\cup (W-X_2)$. Notice that it is
homeomorphic to $W$ (since $X_1$ is homeomorphic to $X_2$). Consider
the homology class $\Sigma '\in H_2(V;{\mathbb {Z}})$ corresponding to
$\Sigma \in H_2 (W; \Z)$.  It can be represented by an orientable
embedded surface $F'_0$ which has $j+1$ disjoint spherical components
(all with self-intersection $(-1)$) and some further components. This
follows from the fact that the $j$ exceptional divisors of $W-X_2$ can
be represented by such spheres in $W-X_2=V-X_1$, and in addition, the
generator of $H_2(X_1; {\mathbb {Z}})$ also can be represented by an
embedded sphere (of self-intersection $(-1)$), since $K_1$ is a slice
knot.

Note that (since the signature $\sigma (W) $ is divisible by $4$)
the Euler characteristics 
$\chi(W)$ and $\chi(V)$ are even.  Let $F_1$ be a closed, orientable
surface embedded in $W$ such that $F_0 \cap F_1 = \emptyset$,
$\chi(F_1) = \chi(F_0) - \chi(W)$, $[F_1] = 0$ and $F^r_1 \cdot F^r_1
= 0$ for each connected component $F^r_1$ of $F_1$. (For example,
$F_1$ can be chosen to be standardly embedded in a local coordinate
chart of $W$.)  Similarly, let $F_1'$ be a closed, orientable surface
in $V$ such that $F_0' \cap F_1' = \emptyset$, $\chi(F_1') =
\chi(F_1') - \chi(V)$, $[F_1'] = 0$ and ${F'^r_1} \cdot F'^r_1 = 0$
for each connected component $F'^r_1$ of $F_1'$.

Then conditions (1)-(5) of \cite[Theorem 3.1]{Sa03} are satisfied for
$W$, $F_0$ and $F_1$: (1) is obvious by the choice of $F_1$, (2)
follows from Lemma~\ref{szigmakar}, (3) and (4) are obvious and (5)
follows from Lemma~\ref{szigmakar} as well.  Similarly, conditions
(1)-(5) of \cite[Theorem 3.1]{Sa03} are also satisfied for $V$, $F_0'$
and $F_1'$.  So there exist fold maps on $W$ and $V$ such that their
singular sets are the surfaces $F_0 \cup F_1$ and $F_0' \cup F_1'$,
respectively.  By construction, $V$ contains $j+1$ disjoint
$(-1)$-spheres, hence by Theorem~\ref{thm:saeki}, with the choice $k=j+1$ we
have $\sg^k (V, A)=0$.  For $W$ the argument following
Lemma~\ref{szigmakar} shows that with the same choice of $k$ we have
$\sg ^k (W, A)>0.$ 
It is easy to see 
that for any $1 \leq l \leq j$ we have $\sg^l (V, A)= \sg^l (W, A)=0$.
This completes the proof of
Theorem~\ref{minuszegyes}.
\end{proof}

\section{Stable maps and defects}
\label{sec:defect}
Let us start by recalling the notions of total defect, canonical
framing and stable framing \cite{KiMe}.  A \emph{stable framing} of a
$3$-manifold $M$ is a homotopy class of a trivialization (i.e.\ a
maximum number of linearly independent sections) of the trivial vector
bundle $TM \+ \ep^1$.  The degree $d(\phi)$ of a stable framing $\phi$
is the degree of the map $\nu \co M \to S^3$, where $\nu$ is the
framing of $\ep^1$.  The \emph{Hirzebruch defect} $h(\phi)$ of $\phi$
is defined to be $p_1(X, \phi) - 3\si(X)$, where $X$ is a compact
oriented $4$-manifold bounded by $M$.  The \emph{total defect}
$H(\phi)$ of $\phi$ is the pair $(d(\phi), h(\phi))$ and $H \co
\mathbb F_{\mathfrak s} \to \Z \+ \Z$ is an embedding of the set of
homotopy classes of stable framings $\mathbb F_{\mathfrak s}$
extending a fixed spin structure $\mathfrak s$ on $M$.  Finally, a
stable framing $\phi$ is \emph{canonical} for a spin structure
$\mathfrak s$ if it is compatible with $\mathfrak s$ and $|H(\phi)|
\leq |H(\psi)|$ for any stable framing $\psi$ which is compatible with
$\mathfrak s$.  It also follows that the invariant $2|d| + |h| \co
\mathbb F_{\mathfrak s} \to \N$ takes its minimum on a canonical
$\phi$.  A canonical framing may be not unique.  For details see
\cite{KiMe}.

A smooth map $f \co X \to \R^3$ of a $4$-manifold without
singularities near the boundary induces a stable framing $\phi$ on the
complement of a neighborhood $N(\Si)$ of the singular set $\Si$ such
that $\phi$ also gives a stable framing $\phi_{\del X}$ on the
boundary $3$-manifold.  We get $\phi = (\csi_0, \csi_1, \csi_2,
\csi_3)$ by taking the $1$-dimensional kernel $\csi_0$ of $df$ in the
tangent space of $X-N(\Si)$ and by pulling back the standard $1$-forms
$dx_1, dx_2, dx_3$ in $\R^3$ via the differential of the submersion
$f|_{X-N(\Si)}$.  Then a chosen Riemannian metric on $X-N(\Si)$ gives
$\csi_i$ as the dual to $dx_i$, $i=1,2,3$.

\begin{lem}
  Let $X$ be a compact oriented $4$-manifold with boundary
  and $\Si^0=
  \bigcup_{i=1}^{u} \Si_i$ and $\Si^1= \bigcup_{i=1}^{v} \Si_{u+i}$
  unions of closed, oriented, connected, non-empty disjoint surfaces
  embedded in $X$.  Assume $\Si^0 \cup \Si^1$ is disjoint from a
  neighborhood of $\del X$.  If there exists a fold map $X \to \R^3$
  with $\Si^0$ and $\Si^1$ as definite and indefinite fold singular
  sets, respectively, then
\begin{enumerate}[\rm (1)]
\item
the Hirzebruch defect $h(\phi_{\del X}) = \Si^0 \cdot \Si^0 -3\si(X)$,
\item
$\Si^1 \cdot \Si^1 = 0$,
\item
the Poincar\'e dual to the mod $2$ homology class $[\Si^0 \cup \Si^1]$ is  $w_2(X)$,
\item
the degree $d(\phi_{\del X}) = \chi(X) - \chi(\Si^0) + \chi(\Si^1)$ and
\item
$\phi_{\del X}$ is compatible with a spin structure on the complement $X- N(\Si^0 \cup \Si^1)$
of a tubular neighborhood of $\Si^0 \cup \Si^1$.
\end{enumerate}
\end{lem}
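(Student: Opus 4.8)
The plan is to treat this lemma as the relative, boundary version of the necessity direction of Saeki's Theorem~\ref{thm:saeki}: when $\del X=\emptyset$ the five assertions specialize exactly to conditions (5), (4), (2), (1) of that theorem together with spin-compatibility, so the strategy is to run the same obstruction-theoretic analysis while carefully retaining the framing $\phi_{\del X}$ that the map leaves on the boundary. Throughout I would work with the explicit frame $\phi=(\csi_0,\csi_1,\csi_2,\csi_3)$ of $TX$ over $X-N(\Si)$ constructed in the paragraph preceding the lemma, where $\csi_0=\ker df$ and $\csi_1,\csi_2,\csi_3$ are the metric duals of $f^*dx_i$, and localize every characteristic number at the singular surfaces $\Si^0\cup\Si^1$.

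First I would record the two local normal forms. Near a definite fold $(x,y,z,v)\mapsto(x^2+y^2,z,v)$ one computes $\ker df=\langle(-y,x,0,0)\rangle$, so the kernel line field makes one full turn around each normal circle to $\Si^0$; near an indefinite fold $(x,y,z,v)\mapsto(x^2-y^2,z,v)$ one gets $\ker df=\langle(y,x,0,0)\rangle$, a hyperbolic model. These two pictures drive all of the signs below. Statements (3) and (5) are then immediate: since $\phi$ trivializes $TX$ over $X-N(\Si)$, the class $w_2(X)$ vanishes on the complement and is therefore carried by $\Si$; a mod~$2$ count of the local obstruction along each fold sheet identifies $\PD(w_2)$ with $[\Si^0\cup\Si^1]$, giving (3), while the same trivialization furnishes a spin structure on $X-N(\Si^0\cup\Si^1)$ with which $\phi_{\del X}$ is tautologically compatible, giving (5). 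Statement (2), namely $\Si^1\cdot\Si^1=0$, follows from the indefinite local model: the saddle structure supplies a nowhere-zero section of the normal bundle of $\Si^1$, forcing its Euler number to vanish.

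The two quantitative statements (1) and (4) are obstruction-theoretic index counts. For (4) I would view the degree $d(\phi_{\del X})$ of the map $\nu\co\del X\to S^3$ framing $\ep^1$ as the obstruction to extending the $\ep^1$-section across $X$; a Poincar\'e--Hopf argument for the kernel line field, with the definite and indefinite folds entering the index count with opposite signs dictated by the two local models, yields $d(\phi_{\del X})=\chi(X)-\chi(\Si^0)+\chi(\Si^1)$. For (1) I would compute the relative Pontryagin number $p_1(X,\phi_{\del X})\in H^4(X,\del X)$, which, because $\phi$ already extends over $X-N(\Si)$, equals the sum of the local contributions over the tubular neighborhoods $N(\Si_i)$. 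The definite local model contributes the Euler number $\Si^0_i\cdot\Si^0_i$ of the normal bundle, while the indefinite contribution vanishes (consistently with (2)); summing gives $p_1(X,\phi_{\del X})=\Si^0\cdot\Si^0$, whence $h(\phi_{\del X})=p_1(X,\phi_{\del X})-3\si(X)=\Si^0\cdot\Si^0-3\si(X)$ by the definition of the Hirzebruch defect.

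The main obstacle is the definite-fold contribution to the relative Pontryagin number in (1): one must show that the local clutching of the full frame $(\csi_0,\csi_1,\csi_2,\csi_3)$ over $\del N(\Si^0_i)$ contributes \emph{exactly} $\Si^0_i\cdot\Si^0_i$ --- with the correct normalization, not merely up to a universal factor --- since this is precisely the term that becomes Saeki's $3\si(X)$ in the closed case. I would pin this down either by a direct clutching-function calculation in $\pi_3(SO(4))$ for the definite model, or by the consistency check of doubling $X$ (where $\si$ vanishes and the contributions of the two halves must cancel), which fixes the normalization and confirms that the indefinite folds contribute nothing. The remaining sign bookkeeping in (4) is then routine once the two local models above are in hand.
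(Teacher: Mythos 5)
Your overall strategy coincides with the paper's: cut $X$ along $\del N(\Si^0\cup\Si^1)$, use that the fold map frames the complement (so all obstructions vanish there), and assemble the boundary invariants from universal local contributions of the fold components. The difference is one of execution. The paper discharges every local computation by citation: Kirby--Melvin \cite[Lemma~2.3 (b)]{KiMe} gives $p_1(X-N(\Si),\phi)=0$ and $d(\phi_{\del(X-N(\Si))})=\chi(X-N(\Si))$ because the framing extends over the complement, while Saeki \cite[Lemma~3.2, Lemma~3.4 and proof of Theorem~3.1]{Sa03} supplies exactly the local data you propose to re-derive: the degree correction $2\chi(\Si^1)$ across indefinite folds, and the defects $h(\phi_i)=-\Si_i\cdot\Si_i+3\sgn(\Si_i\cdot\Si_i)$ (definite) and $h(\phi_i)=0$ (indefinite) on $\del N(\Si_i)$; statement (1) then follows by implicitly invoking Novikov additivity of the signature. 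Your bookkeeping via the relative $p_1$ of each piece rather than its Hirzebruch defect is in fact slightly cleaner --- the $3\sgn(\Si_i\cdot\Si_i)$ terms and the signature additivity cancel out --- and your identification of the crux, that each definite fold component must contribute exactly $\Si_i\cdot\Si_i$ to $p_1(X,\phi_{\del X})$, is precisely the content of Saeki's Lemma~3.4, so carrying out the $\pi_3(SO(4))$ clutching computation you propose would genuinely complete the proof without that citation.

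Three caveats on the details. First, in (2) the saddle model does not supply a nowhere-zero section of the normal bundle: the positive and negative eigenspaces of the Hessian give a splitting into two (possibly nonorientable) line subbundles, i.e.\ a reduction of the structure group to a finite $2$-primary group --- this is the paper's actual argument --- whence the Euler class is torsion and the Euler number vanishes; a line subbundle is not a section, and your claim as stated is false for a general split bundle only in its justification, not its conclusion. Second, your fallback normalization device is unsound: the fold map on $X$ does not extend over the double of $X$, and even in a symmetric closed model the cancellation of the two halves only shows that the local contribution is an odd function of $\Si_i\cdot\Si_i$, which does not exclude a wrong multiple $\lambda\,(\Si_i\cdot\Si_i)$ with $\lambda\neq 1$; you need the direct clutching computation, or a closed example with $\si\neq 0$ and known singular set, or simply the citation to \cite{Sa03}. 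Third, the vanishing of the indefinite-fold contribution to $p_1$ is not ``consistency with (2)'': a trivial normal bundle does not by itself force the relative $p_1$ of $N(\Si_i)$ with the fold-induced boundary framing to vanish, so this too is a local computation (again Saeki's Lemma~3.4) rather than a corollary of $\Si^1\cdot\Si^1=0$.
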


\begin{rem}
Our proof implies that if there exists such a fold map but 
$\Si^1 = \emptyset$, then
(1)-(5) still hold (if we define $\emptyset \cdot \emptyset = 0$ and 
$\chi(\emptyset) = 0$).
\end{rem}
\begin{proof}
Suppose that there exists such a fold map $f \co X \to \R^3$. 

(2) holds because the
normal disk bundle of $\Si^1$ is trivial, since the symmetry group of
the indefinite fold singularity germ $(x, y) \mapsto x^2 - y^2$ can be
reduced to a finite $2$-primary group.  

(3) holds because the map $f$
restricted to $X - (\Si^0 \cup \Si^1)$ is a submersion into $\R^3$
hence the tangent bundle of $X - (\Si^0 \cup \Si^1)$ has a framing.

For (5)
let $\phi$ denote the induced framing on $X - N(\Si)$.  Since the
Poincar\'e dual $PD[\Si^0 \cup \Si^1] \equiv w_2(X)$ (mod 2), $\phi$ is compatible
with a spin structure on $X - N(\Si)$ and this proves (5). 

For (4),
 we know by
\cite[Lemma~2.3 (b)]{KiMe} that $d(\phi_{\del (X - N(\Si))}) = \chi( X
- N(\Si))$ since
the stable framing $\phi_{\del (X - N(\Si))}$ on $\del (X - N(\Si))$
given by $f$ extends to a framing on $X- N(\Si)$.
We have $d(\phi_{\del (X - N(\Si))}) = d(\phi_{\del X}) -2
\chi(\Si^1)$ by \cite[Lemma~3.2]{Sa03}.  Hence we have $d(\phi_{\del
  X}) = 2 \chi(\Si^1) + \chi( X - N(\Si)) = \chi(\Si^1) + \chi(X) -
\chi(\Si^0)$, which proves (4). 

For (1), by
\cite[Lemma~2.3 (b)]{KiMe}  we have $p_1(X - N(\Si), \phi_{\del(X - N(\Si))}) = 0$.
Also we have that $\sum_{j} h(\phi_j)
= p_1(X - N(\Si), \phi) - 3\si(X - N(\Si))$, where $j$ runs over the
boundary components of $X - N(\Si)$ and $\phi_j$ denotes the
corresponding stable framing on that boundary component.  Hence
$h(\phi_{\del X}) = -3\si(X - N(\Si)) - \sum_{i=1}^{u+v} h(\phi_i)$,
where $\phi_1, \ldots, \phi_{u+v}$ are the stable framings on $\del
N(\Si_i)$, $i=1,\ldots, u+v$, respectively.  From \cite[Proof of
  Theorem~3.1 and Lemma~3.4]{Sa03}, we know that $h(\phi_i) = -\Si_i
\cdot \Si_i + 3\sgn(\Si_i \cdot \Si_i)$ if $\Si_i$ is a definite fold
component of $\Si$, otherwise $h(\phi_i) = 0$.  Note that $\Si_i \cdot
\Si_i = 0$ for indefinite fold singular set components.  Thus
$h(\phi_{\del X}) = -3\si(X - N(\Si)) + \Si \cdot \Si -
3\sum_{i=1}^{u+v} \sgn(\Si_i \cdot \Si_i)$ and
$$h(\phi_{\del X}) =  \Si \cdot \Si -3\si(X),$$
proving (1).
\end{proof}

Let $X$ be the $4$-manifold obtained by attaching a $2$-handle to
  $D^4$ along a $p$-framed knot in $S^3$, where $p \in \Z$.
  We can express the total
defect of the induced stable framing $\phi_{\del X}$ as follows.
\begin{prop}\label{foldtotaldefect}
    Let $f \co X \to \R^3$ be a fold map with $\Si^0 =
    \bigcup_{i=1}^{u} \Si_i$ and $\Si^1 = \bigcup_{i=1}^{v} \Si_{u+i}$
    as non-empty definite and indefinite fold singular sets,
    respectively, both consisting of closed connected orientable
    surfaces.  Then
\begin{enumerate}[\rm(1)]
\item the total defect $H(\phi_{\del X}) = (\chi(\Si^1) + 2 -
  \chi(\Si^0), -3p + p \sum_{i=1}^u k_i^2)$, where each component $\Si_i$ of
  the singular set represents $\pm k_i$ times the generator of
  $H_2(X;\Z)$, and 
\item if $\del X$ is an integral homology sphere, then $H(\phi_{\del
  X})$ is of the form $(2s, 4r+2)$ or of $(2s, 4r)$ for some $r, s \in
  \Z$.
\end{enumerate}
\end{prop}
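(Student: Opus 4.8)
The plan is to read off both statements directly from the preceding Lemma, which already expresses the degree $d(\phi_{\del X})$ and the Hirzebruch defect $h(\phi_{\del X})$ of the induced stable framing in terms of Euler characteristics and self-intersections of the fold loci. All that then remains is to evaluate the three topological ingredients $\chi(X)$, $\si(X)$ and $\Si^0\cdot\Si^0$ from the handle description of $X$, and afterwards to track parities under the integral homology sphere hypothesis. Recall that $X$ is $D^4$ with a single $2$-handle attached along a $p$-framed knot $K$, so $X$ is simply connected, $\chi(X)=2$, and $H_2(X;\Z)\cong\Z$ is generated by a class $g$ with $g\cdot g=p$; in particular the intersection form is $\langle p\rangle$ and $\si(X)=\sgn(p)$.

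For part (1) the first coordinate is immediate from part (4) of the preceding Lemma: $d(\phi_{\del X})=\chi(X)-\chi(\Si^0)+\chi(\Si^1)=\chi(\Si^1)+2-\chi(\Si^0)$. For the second coordinate I use part (1) of that Lemma, $h(\phi_{\del X})=\Si^0\cdot\Si^0-3\si(X)$. Since the components $\Si_1,\dots,\Si_u$ of $\Si^0$ are pairwise disjoint, $[\Si_i]\cdot[\Si_j]=0$ for $i\neq j$, so $\Si^0\cdot\Si^0=\sum_{i=1}^u\Si_i\cdot\Si_i$; writing $[\Si_i]=\pm k_i\,g$ gives $\Si_i\cdot\Si_i=k_i^2(g\cdot g)=k_i^2\,p$, whence $\Si^0\cdot\Si^0=p\sum_{i=1}^u k_i^2$. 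For the framings relevant here, namely the $(-1)$-framing used throughout and, more generally, whenever $|p|=1$, one has $\si(X)=\sgn(p)=p$, so $h(\phi_{\del X})=p\sum_{i=1}^u k_i^2-3p=-3p+p\sum_{i=1}^u k_i^2$ (for $|p|\geq2$ the term $-3p$ should be read as $-3\sgn(p)$). Combining the two coordinates proves (1).

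For part (2) I first note that the hypothesis forces $|p|=1$: the boundary $\del X$ is $p$-surgery on $K$, so $H_1(\del X)\cong\Z/|p|$, which vanishes exactly when $|p|=1$; hence the formula of (1) applies verbatim. The first coordinate $\chi(\Si^1)+2-\chi(\Si^0)$ is even, because every closed orientable surface has even Euler characteristic and hence so do the disjoint unions $\Si^0$ and $\Si^1$; thus $d(\phi_{\del X})=2s$. For the second coordinate I claim $\sum_{i=1}^u k_i^2$ is odd, which makes $h(\phi_{\del X})=p\bigl(\sum_{i=1}^u k_i^2-3\bigr)$ even, so $h(\phi_{\del X})$ is either $4r$ or $4r+2$; together with the parity of $d$ this is precisely the asserted form.

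It remains to establish the parity claim, which carries the real content of (2). Each indefinite component $\Si_{u+i}$ has self-intersection zero by part (2) of the preceding Lemma, and since $g\cdot g=\pm1$ the only class of square zero in $H_2(X;\Z)\cong\Z$ is $0$; hence every indefinite component is null-homologous and $[\Si^1]=0$. Part (3) of that Lemma then gives, mod $2$, that $\PD[\Si^0]\equiv\PD[\Si^0\cup\Si^1]\equiv w_2(X)$. Because $|p|=1$ the form $\langle p\rangle$ is odd, so $X$ is not spin and $w_2(X)$ is the nonzero element of $H^2(X;\Z/2)\cong\Z/2$, i.e.\ $w_2(X)\equiv \PD(g)\pmod2$; thus $[\Si^0]=\bigl(\sum_{i=1}^u\pm k_i\bigr)g\equiv g\pmod2$, which says $\sum_{i=1}^u k_i$ is odd. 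Since $k^2\equiv k\pmod2$ for every integer $k$, we get $\sum_{i=1}^u k_i^2\equiv\sum_{i=1}^u k_i\equiv1\pmod2$, proving the claim. The hard part is exactly this step: one must convert the characteristic (Wu-type) condition of the Lemma into the parity of $\sum k_i^2$, the decisive elementary observation being $k^2\equiv k\pmod2$.
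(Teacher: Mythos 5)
Your proof is correct, and part (1) is essentially the paper's own argument: read $d(\phi_{\del X})$ and $h(\phi_{\del X})$ off the preceding Lemma and substitute $\chi(X)=2$, $\Si^0\cdot\Si^0=p\sum_i k_i^2$, $\si(X)=\sgn(p)$. Your parenthetical observation that the term $-3p$ is literally $-3\si(X)=-3\sgn(p)$, hence agrees with the stated formula only for $|p|\leq 1$, is a fair correction of a point the paper glosses over (harmless in part (2) and in all applications, where $p=-1$). For part (2) you take a genuinely different route. The paper invokes \cite[Theorem~2.6]{KiMe}: for an integral homology sphere the total defects of stable framings compatible with the (unique) spin structure form the coset $\Lambda_0+(0,k)$ with $k\in\{0,2\}$ and $\Lambda_0=\langle(0,4),(-1,2)\rangle$; combined with the evenness of $d$ (closed orientable surfaces have even Euler characteristic, which you also use) this forces $h\equiv k \pmod 4$. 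You instead prove evenness of $h$ internally: $|p|=1$ forces each indefinite component (square zero by Lemma part (2)) to be null-homologous in the rank-one unimodular lattice, so the characteristic condition of Lemma part (3) gives $[\Si^0]\equiv g \pmod 2$, hence $\sum_i k_i$ is odd, hence $\sum_i k_i^2$ is odd and $h=p\bigl(\sum_i k_i^2-3\bigr)$ is even. Since the statement as written amounts to both coordinates of $H(\phi_{\del X})$ being even, your argument does establish it. The trade-off: your route is elementary and self-contained (no appeal to the coset structure of \cite{KiMe}), and it extracts the nontrivial intermediate fact that $\sum_i k_i^2$ is always odd, consistent with the values $1,3,5$ appearing later; the paper's route yields a refinement your argument does not, namely that which of the two residues $h\bmod 4$ occurs is determined by $\del X$ alone (through the $\mu$-invariant), independently of the fold map --- the structural fact that sits behind the canonicity argument in the proof of Theorem~\ref{kanonikus}, where \cite{KiMe} is invoked again.
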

\begin{proof}
We get that $h(\phi_{\del X}) = p \sum_i k_i^2 -3p$, where $i$ runs
over the definite fold components of $\Si$.  
 We also have $d(\phi_{\del X}) =
\chi(\Si^1) + 2 - \chi(\Si^0)$.  
This gives that if all the singular set components are orientable, then $d(\phi_{\del X})$ is even.  
The
total defect $H(\phi_{\del X})$ is equal to
\[
(\chi(\Si^1) + 2 - \chi(\Si^0), -3p +p \sum_{i=1}^u k_i^2).
\] 
If $\del X$ is a homology sphere, then by \cite[Theorem~2.6]{KiMe} we
obtain that $H(\phi_{\del X})$ should be in the coset $\Lambda_0 +
(0,k)$, where $k=0$ or $k=2$, and $\Lambda_0$ is the subgroup of $\Z
\+ \Z$ generated by $(0, 4)$ and $(-1, 2)$. Thus $h(\phi_{\del X})$ is
of the form $4r+2$ for $r,s \in \Z$ or of the form $4r$ for $r \in \Z$
(this depends on the $\mu$-invariant of $\del X$, see \cite{KiMe}).
\end{proof}

Let $X$ be the $4$-manifold obtained by attaching a $2$-handle to
  $D^4$ along a $(-1)$-framed
   knot in $S^3$.

\begin{prop}\label{Xencandeffold}
  There is a fold map $f \co X \to \R^3$ such that the total defect of
  the stable framing induced by $f$ on $\del X$ is canonical.
\end{prop}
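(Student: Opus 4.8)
The plan is to produce, by applying Saeki's theorem (Theorem~\ref{thm:saeki}) to the double of $X$, a fold map whose definite fold locus represents the generator of $H_2(X;\Z)$ exactly once, while its indefinite fold locus carries precisely the Euler characteristic needed to make the degree of the induced boundary framing vanish. I would then read off the total defect from Proposition~\ref{foldtotaldefect} and check that it minimizes $2|d|+|h|$ over the coset of framings compatible with the spin structure on $\del X$.

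First I recall from the proof of Proposition~\ref{Xenfold} that the double $DX$ of $X$ along $\del X$ is diffeomorphic to $\CP^2 \# \bCPk$, that the generator of $H_2(X;\Z)$ is represented by an embedded orientable surface $S$ with $S\cdot S=-1$, and that its mirror $\bar S$ in the second factor satisfies $\bar S\cdot \bar S=+1$. Put $g=g(S)$. On $DX$ I set $F_0=S\cup \bar S$ and $F_1=T\cup T'$, where $T$ and $T'$ are standardly embedded, null-homologous, closed orientable surfaces of genus $g+1$ placed in small coordinate charts of the first and the second copy of $X$ respectively, disjoint from $F_0$. Since $\chi(\CP^2\# \bCPk)=4$, $\chi(F_0)=2(2-2g)$ and $\chi(F_1)=2\bigl(2-2(g+1)\bigr)=-4g$, condition (1) of Theorem~\ref{thm:saeki} holds; conditions (2), (3) and (5) hold exactly as in the proof of Proposition~\ref{Xenfold} (note $[F_1]=0$, so the mod~$2$ class is unchanged, and $F_0\cdot F_0=S\cdot S+\bar S\cdot \bar S=0=3\sigma(DX)$), while condition (4) holds since each of $T,T'$ lies in a chart. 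Restricting the resulting fold map to the first copy $X$ gives a fold map $f\co X\to \R^3$ with definite fold set $\Si^0=S$ and indefinite fold set $\Si^1=T$.

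It then remains to compute the total defect and confirm canonicity. By Proposition~\ref{foldtotaldefect} applied with $p=-1$, the single definite component $S$ represents the generator once, so $\sum_i k_i^2=1$ and
\[
h(\phi_{\del X})=-3(-1)+(-1)\cdot 1=2, \qquad d(\phi_{\del X})=\chi(\Si^1)+2-\chi(\Si^0)=-2g+2-(2-2g)=0 .
\]
Thus $H(\phi_{\del X})=(0,2)$. Since $(0,2)\in \Lambda _0+(0,2)$ but $(0,2)\notin \Lambda _0$, Proposition~\ref{foldtotaldefect}(2) shows that $\del X$ is of the type with $k=2$, so the total defects of all stable framings on $\del X$ compatible with its spin structure form the coset $\Lambda _0+(0,2)=\{(-b,\,4a+2b+2):a,b\in \Z\}$. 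On this coset the invariant $2|d|+|h|=2|b|+|4a+2b+2|$ attains its minimum value $2$, realized in particular by $(0,2)$; since a canonical framing is exactly one of minimal total defect (equivalently, one minimizing $2|d|+|h|$, cf.\ Section~\ref{sec:defect}), the framing $\phi_{\del X}$ is canonical.

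The step requiring the most care is the tuning of the indefinite fold locus: I must choose $\chi(F_1)$ (equivalently the genera of $T$ and $T'$) so that the restriction $\Si^1=T$ has precisely the Euler characteristic $-2g$ cancelling $\chi(\Si^0)-2$ in the degree formula, while keeping every component of $F_1$ null-homologous with trivial normal bundle so that conditions (2) and (4) of Theorem~\ref{thm:saeki} remain undisturbed. The accompanying lattice computation is elementary but is what upgrades ``small defect'' to ``minimal, i.e.\ canonical, defect''; here it is reassuring that the competing minimal value $h=0$ is \emph{not} attainable by a fold map on $X$, since the indefinite fold components are null-homologous (their self-intersections vanish and $H_2(X;\Z)\cong \Z$ carries the form $\langle -1\rangle$), so $w_2(X)\neq 0$ must be carried by $\Si^0$, forcing $\sum_i k_i^2$ to be odd and hence $h(\phi_{\del X})=2$ for the economical choice $k=1$.
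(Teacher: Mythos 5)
Your construction and defect computation coincide with the paper's own proof: double $X$ to $\CP^2 \# \bCPk$, apply Theorem~\ref{thm:saeki} with $F_0=S\cup \bar S$ and with $F_1$ a pair of null-homologous surfaces of Euler characteristic $\chi(S)-2$, one in each copy of $X$, restrict the resulting fold map to $X$, and read off $H(\phi_{\del X})=(0,2)$ from Proposition~\ref{foldtotaldefect} with $p=-1$ and $\sum_i k_i^2=1$. Up to that point the two arguments are identical and your computations are correct.

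The final step, however, has a genuine (if repairable) flaw. You declare that canonical is \emph{equivalent} to minimizing $2|d|+|h|$, but the paper states only one implication: a canonical framing (one minimizing the norm $|H(\phi)|$, which is never made explicit) necessarily minimizes $2|d|+|h|$. The converse is false on this very coset: $\Lambda_0+(0,2)=\{(-b,\,4a+2b+2): a,b\in\Z\}$ contains $(1,0)$ (take $a=0$, $b=-1$) and $(-1,0)$ (take $a=-1$, $b=1$), both of which realize your minimal value $2|d|+|h|=2$; yet these cannot be canonical, since by \cite{KiMe} --- as used in the paper's proof of Theorem~\ref{kanonikus} --- the canonical total defect on an integral homology sphere is $(0,0)$ or $(0,\pm 2)$. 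So minimality of $2|d|+|h|$ alone certifies nothing, and your lattice computation does not by itself establish canonicity of $\phi_{\del X}$. The repair is exactly the citation the paper relies on: since $(0,0)\notin \Lambda_0+(0,2)$, the canonical defects in this coset are $(0,\pm 2)$, and since $H$ embeds the homotopy classes of compatible stable framings into $\Z\oplus\Z$, the framing you constructed, having total defect $(0,2)$, is canonical. Your closing remark about the unattainability of $h=0$ then becomes unnecessary (and as argued it is incomplete anyway: oddness of $\sum_i k_i^2$ does not exclude $\sum_i k_i^2=3$, which would give $h=0$; what excludes $(0,0)$ is simply that it does not lie in the coset determined by the realized defect $(0,2)$).
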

\begin{proof}
  Double $X$ along its boundary. As before, the resulting closed
  $4$-manifold is diffeomorphic to $\CP^2 \# \bCPk$ (cf.\ \cite{GS99}).
  Let $S$ be an embedded orientable surface in $X$ coming from the
  core of the $2$-handle and a surface in $D^4$ bounding $K$ and hence
  representing the generator of $H_2(X; \Z ) = \Z$.
  
  Apply \cite[Theorem 3.1]{Sa03} as follows. Let $F_0$ be $S
  \cup \bar S$ in $\CP^2 \# \bCPk$ and $F_1$ be a
  union of two disjoint copies of a small closed orientable surface of Euler
  characteristic $\chi(S) - 2$  such that $F_1$ is
  null-homologous (and so its self-intersection is equal to $0$).  One
  component of $F_1$ is embedded into $X$, the other component into
  $\bar X$ and suppose
  $(S \cup \bar S) \cap F_1 = \emptyset$.
    Then conditions (1)-(5) of \cite[Theorem 3.1]{Sa03} are
  satisfied, so there exists a fold map on $\CP^2 \# \bCPk$ with $F_0
  \cup F_1$ as singular set.  Restricting this map to $X$, we get a
  fold map such that (by Proposition~\ref{foldtotaldefect}) the total
  defect of the induced stable framing on $\del X$ is equal to $(0,
  2)$.  
  Hence this
  stable framing is canonical.
\end{proof}

Note that if $S$ is a sphere, then a similar construction to Proposition~\ref{Xenfold} gives us
a definite fold map with the claimed property.

Now we are in the position to prove Theorem~\ref{kanonikus}.
\begin{proof}[Proof of Theorem~\ref{kanonikus}]
Once again, let $(K_1, K_2)$ be a pair of knots provided by
Theorem~\ref{thm:vegtelen}, and let $X_j=D^4_{-1}(K_j)$ be the
$2$-handlebody we get by attaching a single $2$-handle to $D^4$ along
$K_j$ with framing $(-1)$.  Clearly both $X_1$ and $X_2$ have fold
maps as given in the proof of Proposition~\ref{Xencandeffold}. These
stable maps satisfy property $\mathcal A$, so $\sg(X_i, \mathcal A) <
\infty$, $i=1,2$.  Obviously $\sg(X_1, \mathcal A) =0$.  If $X_2$ has
a fold map giving $\sg(X_2, \mathcal A) = 0$, then all the singular
set components are spheres and by Proposition~\ref{foldtotaldefect}
the total defect $H(\phi_{\del X_{2}}) = (2s, 3 - \sum_i k_i^2)$ for
some $s \in \Z$.  Since by assumption $\phi_{\del X_{2}}$ is canonical
and $\del X_{2}$ is a homology sphere, by \cite{KiMe} and
Proposition~\ref{foldtotaldefect} its total defect should be equal to
$(0, \pm 2)$ or $(0,0)$.  Hence $\sum_i k_i^2$ is equal to $1$, $3$ or
$5$, which implies that some $k_i = \pm 1$, which is impossible for
$X_2$ since the generator of its second homology cannot be represented
by a smoothly embedded sphere.
  \end{proof}

\begin{rem}
It is not difficult to obtain results similar to those of Sections 3 and 4
in the case of $\mathcal M$ being the set of all the definite fold maps
and $\mathcal S$ being the one element set of the definite fold singularity.
However, a $4$-manifold $X$ typically does not admit any definite fold map into $\R^3$, cf.\ \cite{SS99};
 in this case $\sg^{k}_{\mathcal M, \mathcal S}(X, A) = \infty$.
\end{rem}

  \section{Appendix: the calculation of the Conway and Jones
    polynomials}
\label{sec:appendix}
  In this appendix we give the details of the computation of the
  Ohtsuki invariants of the $3$-manifolds $S^3_{-1}(K_j(n))$ from
  Section~\ref{sec:two}.  For simplicity let $L_n$ denote $K_2(n)$,
  the knot we get from the knot $K_2$ of the lower part of
  Figure~\ref{elsolink_lefujas} after inserting the module of $n$ full
  twists in the box.

\begin{rem}
  From the knot tables we get that $L_0$ is the $5_2$ knot, $L_1 =
  9_{45}$ and $L_2 = 11n_{63}$.
\end{rem}

We define the Conway and Jones polynomials of oriented links using the
conventions (and, in particular, the skein relations and
normalizations) as they are given in \cite[Page~299]{LW99}.

\begin{lem}\label{lem:conwayszam}
The Conway polynomial of $L_n$ is equal to 
\[
\nabla (L_n)=1+2z^2-nz^4.
\]
In particular, the coeffient of the $z^4$-term is $-n$.
\end{lem}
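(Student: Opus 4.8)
The plan is to compute the Conway polynomial of $L_n$ by exploiting the skein relations directly on the link diagram in Figure~\ref{elsolink_lefujas}, treating the number of full twists $n$ in the module as the variable that controls the recursion. The key observation is that the module of Figure~\ref{modulpelda2} consists of $n$ full left twists inserted into a fixed tangle; changing a single crossing in this twist region relates $L_n$ to a link with one fewer twist, which is exactly the situation the Conway skein relation is designed to handle.

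First I would set up the skein triple at one crossing inside the twist box. Writing $\nabla_+$, $\nabla_-$, and $\nabla_0$ for the Conway polynomials of the three links obtained by taking the positive crossing, the negative crossing, and the oriented smoothing, the skein relation $\nabla_+ - \nabla_- = z \nabla_0$ lets me pass from $n$ full twists to $n-1$ full twists. The smoothed term $\nabla_0$ will itself be a link whose type is independent of $n$ (the smoothing caps off the twist region), so it contributes a fixed polynomial at each step. This yields a first-order linear recursion in $n$ of the form $\nabla(L_n) = \nabla(L_{n-1}) + z \cdot (\text{fixed polynomial})$, or a closely related one depending on how the twists are oriented.

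Next I would pin down the two pieces of data needed to solve the recursion: the base case and the increment. For the base case, the remark identifies $L_0$ as the $5_2$ knot, whose Conway polynomial is $1 + 2z^2$, matching the claimed formula at $n=0$. For the increment, I would compute the Conway polynomial of the fixed smoothed link $\nabla_0$ appearing in the skein step; since each full twist is two crossings, I expect the net effect of adding one full twist to shift the polynomial by a term proportional to $z^4$. Solving the recursion then gives $\nabla(L_n) = 1 + 2z^2 + n \cdot (\text{increment})$, and reading off the $z^4$-coefficient as $-n$ forces the increment to be $-z^4$.

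The main obstacle will be the careful bookkeeping of orientations and crossing signs inside the twist module, since the sign of the increment (and hence whether the $z^4$-coefficient is $+n$ or $-n$) depends sensitively on these choices. In particular I must verify that the oriented resolution $\nabla_0$ is genuinely $n$-independent and compute its Conway polynomial correctly, and I must check that the two crossings of each full twist combine to produce a single clean recursion step rather than introducing lower-order $z^2$ or constant corrections that would spoil the match with the $n=0$ base case. Once the orientation conventions of \cite{LW99} are fixed and the smoothed link is identified, the computation reduces to the routine linear recursion described above.
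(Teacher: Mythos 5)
Your proposal is correct and follows essentially the same route as the paper's proof: a skein recursion at a crossing of the twist module, $\nabla(L_n)-\nabla(L_{n-1})=-z\cdot\nabla(J_0)$ in the convention of \cite{LW99}, with base case $\nabla(L_0)=1+2z^2$ and a fixed smoothed link $J_0$ independent of $n$ (a right-handed trefoil plus an unknot with linking number zero, for which $\nabla(J_0)=z^3$), giving an increment of $-z^4$ per full twist. One caution: you should pin down the increment by computing $\nabla(J_0)$ directly (as the paper does), not by ``reading off'' the $z^4$-coefficient as $-n$, since that coefficient is precisely what the lemma asserts and inferring the increment from it would be circular.
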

\begin{proof}
Recall that the Conway polynomial of an oriented knot/link satisfies the 
skein relation
\[
\nabla (K_+)-\nabla (K_-)=-z\cdot \nabla (K_0)
\]
and is normalized as $\nabla (U)=1$, where $U$ denotes the unknot, and
$K_+, K_-, K_0$ admit projections identical away from a crossing,
where $K_+$ has a positive, $K_-$ a negative crossing and $K_0$ is the
oriented resolution.

The skein relation applied to any of the crossing in the module of $L_n$
shows that 
\[
\nabla (L_n)-\nabla (L_{n-1})=-z\cdot \nabla (J_0),
\]
where $J_0$ is the $2$-component link we get by replacing the module with 
Figure~\ref{fig:IO}. 
\begin{figure}[ht]
\begin{center}
\epsfig{file=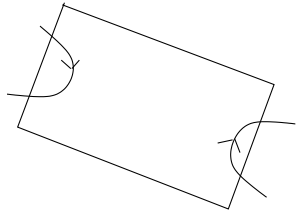, height=2.8cm}
\end{center}
\caption{{\bf The module giving the link $J_0$.} After inserting this
  module, we get a two-component link, one component being the right
  handed trefoil knot, the other one the unknot. The two components
  link geometrically twice, but with vanishing linking number.}
\label{fig:IO}
\end{figure}
This identity shows that $\nabla (L_n)= \nabla (L_0)-nz\cdot \nabla
(J_0)$. Further repeated application of the skein relation computes
$\nabla (L_0)=1+2z^2$.

The link $J_0$ has a trefoil component and an unknot component
(linking it twice, with linking number zero). The repeated application
of the skein relation shows that $\nabla (J_0)=z^3$. This computation
then proves the lemma.
\end{proof}

\begin{rem}\label{rem:azonositas}
  It is not hard to see that $L_0$ is isotopic to the knot $5_2$ in
  the usual knot tables, while the $2$-component link $J_0$ can be
  identified with the link $L7n2$ in Thistlethwaite's Link Table.
\end{rem}

\begin{lem}\label{lem:jonesszam}
  For $n \geq 1$ the Jones polynomial $V(L_n, t)$ of $L_n$ is
\[
(1 + t^{-2} + \cdots + (t^{-2})^{n-1}) \tilde V(t) + t^{-2n}V(L_0,t),
\]
where $\tilde V(t) = t^{-1}(t^{1/2}-t^{-1/2})V(J_0,t)= 2t^{-1} -
3t^{-2} + 3t^{-3} - 3t^{-4} +2t^{-5} -2t^{-6} + t^{-7}$ and $V(L_0, t)
= t^{-1} - t^{-2} + 2t^{-3} - t^{-4} +t^{-5} - t^{-6}$.
\end{lem}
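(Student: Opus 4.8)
The plan is to imitate the Conway-polynomial computation of Lemma~\ref{lem:conwayszam}, replacing the Conway skein relation by the Jones skein relation in the normalization of \cite{LW99}. First I would set up a recursion in $n$. Applying the Jones skein relation at one of the crossings in the module of $L_n$ --- the same crossing used in the proof of Lemma~\ref{lem:conwayszam} --- the crossing change converts $L_n$ into $L_{n-1}$ after an isotopy (a Reidemeister~II simplification that removes one full twist), while the oriented resolution produces exactly the two-component link $J_0$ of Figure~\ref{fig:IO}, independently of $n$, precisely as in the Conway case. Since the Jones polynomial is an honest (unframed) link invariant, no writhe/framing corrections enter, and the two coefficients of the skein relation produce the two-term linear recursion
\[
V(L_n, t) = t^{-2}\, V(L_{n-1}, t) + \tilde V(t),
\qquad
\tilde V(t) = t^{-1}(t^{1/2}-t^{-1/2})\, V(J_0, t).
\]
The delicate bookkeeping here is to fix the sign of the crossing (whether $L_n$ plays the role of $L_+$ or $L_-$) and thereby pin down the two monomials $t^{-2}$ and $t^{-1}$; this is where I expect the only genuine subtlety to lie, since the normalization of \cite{LW99} differs in sign from the most common conventions (compare the $-z$ appearing in the skein relation of Lemma~\ref{lem:conwayszam}). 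I would resolve this by carefully tracking orientations through the clasp in the module.

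Second, I would solve this recursion. Unrolling it $n$ times, or equivalently arguing by induction on $n$ with base case $n=1$ being a single application of the recursion, gives
\[
V(L_n, t) = t^{-2n} V(L_0, t) + \bigl(1 + t^{-2} + \cdots + (t^{-2})^{n-1}\bigr)\, \tilde V(t),
\]
which is precisely the claimed closed form.

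Third, I would compute the two remaining ingredients $V(J_0,t)$ and $V(L_0,t)$ by repeated application of the skein relation, exactly as $\nabla(J_0)=z^3$ and $\nabla(L_0)=1+2z^2$ were obtained in the proof of Lemma~\ref{lem:conwayszam}. Here $J_0$ is the trefoil-plus-unknot link of linking number zero, and $L_0$ is the knot $5_2$, so both computations terminate after a controlled number of crossing changes that reduce the diagrams to unknots and unlinks with known Jones polynomials. Substituting the resulting $V(J_0,t)$ into $\tilde V(t)=t^{-1}(t^{1/2}-t^{-1/2})V(J_0,t)$ yields the stated Laurent polynomial $2t^{-1}-3t^{-2}+3t^{-3}-3t^{-4}+2t^{-5}-2t^{-6}+t^{-7}$, and the value of $V(L_0,t)$ comes out as listed. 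These final computations are routine but tedious; the conceptual content of the lemma is the recursion established in the first step, and I would relegate the explicit base-case skein calculations to a direct verification.
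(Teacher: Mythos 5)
Your proposal is correct and follows essentially the same route as the paper: apply the Jones skein relation (in the \cite{LW99} normalization) at a module crossing so that the other two links in the triple are $L_{n-1}$ and $J_0$, deduce the recursion $V(L_n,t)=t^{-2}V(L_{n-1},t)+\tilde V(t)$, solve it by induction, and finish by computing $V(J_0,t)$ and $V(L_0,t)$ directly by skein moves. The sign/orientation bookkeeping you flag is indeed the only delicate point, and your resolution (with $L_n$ in the role of $K_+$) is consistent with the paper's Conway-polynomial computation, where the same crossing convention yields $\nabla(L_n)-\nabla(L_{n-1})=-z\,\nabla(J_0)$.
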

\begin{proof}
We compute the Jones polynomial using the skein relation
\[
tV(K_+, t) - t^{-1}V(K_-,t) = (t^{1/2} - t^{-1/2})V(K_0),
\]
where the Jones polynomial of the unknot is defined to be $1$ and
$K_+$, $K_-$, $K_0$ are as in the previous lemma. As before, we apply
the skein relation to any of the crossing in the module of $L_n$. (The
two further links in the relation are $L_{n-1}$ and $J_0$ again.)
Therefore induction for $n \geq 0$ shows that
\[
V(L_n, t) = (1 + t^{-2} \cdots + (t^{-2})^{n-1})t^{-1}(t^{1/2} -
t^{-1/2}) V(J_0,t) + (t^{-2})^n V(L_0,t),
\]
By computing $V(J_0,t)$ and $V(L_0,t)$ using the same skein relation
we get the statement. (Cf.\ also Remark~\ref{rem:azonositas} regarding
the polynomials of $L_0$ and $J_0$.)
\end{proof}

\begin{lem}\label{lem:jonesszamok}
For the Jones polynomial of $L_n$ we have 
\[
\frac{ \del^2 V(L_n, e^{h})}{\del h^2}(0) = -12, \qquad
\frac{ \del^3 V(L_n, e^{h})}{\del h^3}(0) = 36n + 108.
\]
\end{lem}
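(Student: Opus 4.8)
The plan is to feed the explicit Jones polynomial of Lemma~\ref{lem:jonesszam} into the substitution $t=e^h$ and extract the two derivatives from the low-order Taylor coefficients. The mechanism is the elementary fact that for a Laurent polynomial $P(t)=\sum_k a_k t^k$ the $m$-th derivative $\frac{\del^m}{\del h^m}P(e^h)$ at $h=0$ equals the weighted coefficient sum $\sum_k a_k\,k^m$; equivalently, the coefficient of $h^m$ in the Taylor expansion of $P(e^h)$ about $h=0$ is $\tfrac{1}{m!}$ times the sought derivative. Hence it suffices to expand, to order $h^3$, each factor in the formula $V(L_n,t)=S_n(t)\,\tilde V(t)+t^{-2n}V(L_0,t)$ of Lemma~\ref{lem:jonesszam}, where $S_n(t)=1+t^{-2}+\cdots+(t^{-2})^{n-1}$, and then take Cauchy products. (For $n=0$ the empty sum returns $V(L_0,t)$ directly, so I may assume $n\geq 1$.)

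First I would expand the two fixed polynomials. The \emph{decisive simplification} is that $\tilde V(1)=0$: since $\tilde V(t)=t^{-1}(t^{1/2}-t^{-1/2})V(J_0,t)$ carries the factor $t^{1/2}-t^{-1/2}$ vanishing at $t=1$, the constant ($h^0$) coefficient of $\tilde V(e^h)$ is zero, so $\tilde V(e^h)$ begins at order $h$. Substituting $t=e^h$ directly produces the first few Taylor coefficients of $\tilde V(e^h)$ and of $V(L_0,e^h)$; for the latter the $h^0$-coefficient is $1$ and the $h^1$-coefficient is $0$ (the standard facts $V(K,1)=1$ and $V'(K,1)=0$ for a knot $K$), while its $h^2$- and $h^3$-data already deliver the base values $-12$ and $108$ of the $n=0$ case.

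Next I would expand the $n$-dependent factors $S_n(e^h)=\sum_{j=0}^{n-1}e^{-2jh}$ and $e^{-2nh}$. Their Taylor coefficients bring in the power sums $\sum_{j=0}^{n-1} j$, $\sum_{j=0}^{n-1} j^2$ and $\sum_{j=0}^{n-1} j^3$, which are polynomial in $n$ of degrees $2$, $3$ and $4$. Here the vanishing $\tilde V(1)=0$ pays off again: in the Cauchy product $S_n(e^h)\tilde V(e^h)$ the degree-$4$ power sum $\sum j^3$ multiplies the zero constant term of $\tilde V(e^h)$ and drops out completely. Forming both Cauchy products to order $h^3$ and adding, one checks that between the two summands the $n^2$-terms cancel at order $h^2$ and the $n^3$-terms cancel at order $h^3$. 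After these cancellations the $h^2$-coefficient is the constant $-6$ and the $h^3$-coefficient is $6n+18$; multiplying by $2!$ and $3!$ gives $-12$ and $36n+108$, as claimed.

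The argument is entirely elementary, so the only real difficulty is bookkeeping: carrying the two Cauchy products accurately through order $h^3$ and confirming that the high-degree-in-$n$ contributions from the power sums genuinely cancel between the $S_n\tilde V$ and $t^{-2n}V(L_0)$ pieces --- it is precisely $\tilde V(1)=0$ that removes the would-be quartic term and makes this cancellation transparent. As consistency checks, the value $-12$ matches $-6$ times the $z^2$-coefficient $2$ of $\nabla(L_n)$ from Lemma~\ref{lem:conwayszam}, and both formulas specialize correctly at $n=0$.
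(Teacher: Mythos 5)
Your proposal is correct and follows essentially the same route as the paper: both start from the product formula of Lemma~\ref{lem:jonesszam}, substitute $t=e^h$, exploit the vanishing $\tilde V(1)=0$ together with the low-order derivative data of $\tilde V(e^h)$ and $V(L_0,e^h)$ at $h=0$, and verify that the quadratic (resp.\ cubic) terms in $n$ cancel between the $S_n\tilde V$ and $t^{-2n}V(L_0)$ contributions; your Taylor-coefficient/Cauchy-product bookkeeping is just the coefficient-level restatement of the paper's Leibniz-rule computation. Your intermediate values ($h^2$-coefficient $-6$, $h^3$-coefficient $6n+18$) and the Conway-polynomial cross-check $V''(1)=-6a_2=-12$ are consistent with the paper's numbers.
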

\begin{proof}
Simple differentiation and substitution gives that
\begin{equation}\label{eq:szamitasok}
\tilde V(e^h)\vert _{h=0} = 0, \qquad \frac{ \del \tilde V(
  e^{h})}{\del h}\vert _{h=0} = 2,\qquad \frac{ \del^2 \tilde V(
  e^{h})}{\del h^2}\vert _{h=0} = -4, \qquad \frac{ \del^3 \tilde V(
  e^{h})}{\del h^3}\vert _{h=0} = -28.
\end{equation}
The identities 
\begin{multline*}
  \frac{ \del V(L_n, e^{h})}{\del h}(h) = (-2e^{-2h} -4e^{-4h} -
  \cdots - (2n-2)e^{(-2n+2)h})\tilde V(e^h) + \\
  + (1 + e^{-2h} + \cdots + e^{(-2n+2)h})\frac{ \del \tilde V(
    e^{h})}{\del h}(h) + \sum_{i=1}^6 (-1)^{i+1}(-2n-i)e^{(-2n-i)h} +
  (-2n-3)e^{(-2n-3)h},
\end{multline*}
\begin{multline*}
  \frac{ \del^2 V(L_n, e^{h})}{\del h^2}(h) = (2^2e^{-2h} + 4^2e^{-4h}
  + \cdots + (2n-2)^2e^{(-2n+2)h})\tilde V(e^h)+ \\ + 2(-2e^{-2h}
  -4e^{-4h} - \cdots - (2n-2)e^{(-2n+2)h})\frac{ \del \tilde V(
    e^{h})}{\del h}(h) + (1 + e^{-2h} + \cdots + e^{(-2n+2)h})\frac{
    \del^2 \tilde V( e^{h})}{\del h^2}(h) + \\ + \sum_{i=1}^6
  (-1)^{i+1}(-2n-i)^2e^{(-2n-i)h} + (-2n-3)^2e^{(-2n-3)h},
\end{multline*}
\begin{multline*}
  \frac{ \del^3 V(L_n, e^{h})}{\del h^3}(h) = (-2^3e^{-2h} -
  4^3e^{-4h} - \cdots - (2n-2)^3e^{(-2n+2)h})\tilde V(e^h)+ \\
  + 3(2^2e^{-2h} + 4^2e^{-4h} + \cdots + (2n-2)^2e^{(-2n+2)h})\frac{
    \del \tilde V( e^{h})}{\del h}(h) + \\ 3(-2e^{-2h} -4e^{-4h} - \cdots
  - (2n-2)e^{(-2n+2)h})\frac{ \del^2 \tilde V( e^{h})}{\del h^2}(h) +
  \\ + (1 + e^{-2h} + \cdots + e^{(-2n+2)h})\frac{ \del^3 \tilde V(
    e^{h})}{\del h^3}(h) + \sum_{i=1}^6
  (-1)^{i+1}(-2n-i)^3e^{(-2n-i)h} + (-2n-3)^3e^{(-2n-3)h},
\end{multline*}
together with the values determined in Equation~\eqref{eq:szamitasok}
now give
\begin{multline*}
\frac{ \del^2 V(L_n, e^{h})}{\del h^2}(0) =
-4n(n-1) -4n + \sum_{i=1}^6 (-1)^{i+1}(2n+i)^2 + (2n+3)^2 = \\=
-4n(n-1) -4n -12n -21 + (2n+3)^2 = -12.
\end{multline*}
This computation proves the first claim of the lemma, and it also
shows (by \cite{Mu94}) that the Casson invariant of $S^3_{-1}(L_n)$ is
equal to $-2$ (and, in particular, is independent of $n$). Furthermore
\begin{multline*}
\frac{ \del^3 V(L_n, e^{h})}{\del h^3}(0) = 
24(1^2 + \cdots + (n-1)^2) + 
24(1+\cdots + (n-1)) -28 n +
\sum_{i=1}^6 (-1)^{i}(2n+i)^3 - (2n+3)^3 =\\=
4(n-1)n(2n-1) + 12n(n-1) -28n + \sum_{i=1}^6 (-1)^{i}(2n+i)^3 - (2n+3)^3=36n+108,
\end{multline*}
verifying the second claim of the lemma.
\end{proof}

\end{document}